\title{Judicious partitions for restricted self-sumsets in cyclic groups}
\author{Keane Maverick}
\date{September 2025}
\newtheorem{theorem}{Theorem}
\newtheorem{lemma}{Lemma}[section]
\newtheorem{corollary}{Corollary}[section]
\newtheorem{remark}{Remark}[section]
\begin{document}

\maketitle

\begin{abstract}
We study the minimax problem for restricted two-fold self-sumsets in $k$-colorings of $\mathbb{Z}_n$. For primes $p$ with $2\le k\le p$ we determine the exact minimum $\max\{0,\,2\lceil p/k\rceil-3\}$. For general $n$ (with $m=\lceil n/k\rceil$) we bound the optimum between a size term $\min\{p(n),\,2m-3\}$ and a periodicity term $f(n/q(n,k))$, and show these bounds are tight when $2m-3\le p(n)$ or $f(n/q(n,k))\le \min\{p(n),\,2m-3\}$. We further prove a stability inequality and a threshold theorem that force concentration in a single subgroup coset near the periodic scale. In the prime case with $m\ge5$ and $2m-3<p$, every optimal coloring contains a class of size $m$ that is an arc (an arithmetic progression up to an affine automorphism). Our approach combines the restricted Erd\H{o}s--Heilbronn phenomenon with block/coset colorings and an injectivity window.

\noindent\textbf{Keywords:} restricted sumsets; Erd\H{o}s--Heilbronn; Dias--da Silva--Hamidoune; judicious partitions; $k$-colorings; cyclic groups; stability.

\noindent\textbf{MSC 2020:} 11B30; 11B13; 05D05.
\end{abstract}

\section{Introduction}

Judicious partition problems ask how well one can optimize a worst-part statistic over all $k$-colorings (for related work, see \cite{BollobasScott1999}).
In additive combinatorics, a natural statistic of ``arithmetical richness'' of a set $A$ is the size of its sumset.
Here we measure richness by the restricted two-fold self-sumset $A\widehat{+}A$
(only sums of two distinct elements of $A$), and ask:

\medskip
Among all $k$-colorings of $\mathbb{Z}_n$, how small can the largest restricted self-sumset be?
\medskip

Formally, we minimize over partitions $\mathbb{Z}_n=A_1\sqcup\cdots\sqcup A_k$
the quantity $\max_i |A_i\widehat{+}A_i|$. The resulting extremal value is $\widehat{\Phi}_k(n)$.

Two ``effects'' determine the scale of the minimum. 
First, a size effect: in any $k$-coloring there is a class of size 
$m=\lceil n/k\rceil$, and for such a set $A$ one has 
$|A\widehat{+}A|\ge \min\{p(n), 2m-3\}$ \cite{Karolyi2004, DiasdaSilvaHamidoune1994}.
Second, a periodicity effect: if $A\subseteq a+H$ for a subgroup $H\le \mathbb{Z}_n$ of size $n/q$, then $|A\widehat{+}A|\le f(n/q)$,
with equality when $A=a+H$.
Our results identify the minimum by comparing these two quantities and give 
stability statements near the periodic scale.

\paragraph{Why restricted sums (and not unrestricted).}
For unrestricted sums $A + A$, the exact minimum $\min_{|A| = m}|A + A|$ in finite abelian groups is known \cite{EliahouKervairePlagne2003}, making the partition minimax a quick corollary.
The restricted setting (distinct summands) over composite modulus does not admit such a closed form.

\paragraph{Organization.}
Section~\ref{sec:prelims} fixes notation and basic objects (2.1) and states the main theorems (2.2).
Section~\ref{sec:proofs} proves the prime case, establishes the general bounds, and pinpoints the exact regimes.
Section~\ref{sec:stability} develops the stability inequality and the threshold theorem.

\section{Preliminaries}\label{sec:prelims}

\subsection{Definitions}

We work in the additive cyclic group $\mathbb{Z}_n=\{0,1,\dots,n-1\}$ with addition taken modulo $n$.
A \emph{$k$-coloring} (partition) of $\mathbb{Z}_n$ is a disjoint union
\[
\mathbb{Z}_n = A_1 \sqcup A_2 \sqcup \cdots \sqcup A_k,
\]
where the sets $A_i$ are the color classes. In every $k$-coloring there is a class of size at least
\[
m = m(n,k) := \left\lceil \frac{n}{k}\right\rceil .
\]

For a set $A\subseteq \mathbb{Z}_n$, the restricted self-sumset collects sums of two distinct elements of $A$:
\[
A\widehat{+}A := \{ a+a'\pmod{n} : a,a'\in A,\ a\neq a'\}.
\]
By convention $|A \widehat{+} A| = 0$ if $|A|\le 1$.

Our objective is the minimax value
\[
\widehat{\Phi}_k(n) := \min_{\mathbb{Z}_n = A_1\sqcup\cdots\sqcup A_k}\ \max_{1\le i\le k}\ |A_i\widehat{+}A_i|,
\]
the smallest possible value of the largest restricted self-sumset among the color classes.

Two arithmetic parameters will be used repeatedly. First, for any integer $r\ge2$, let $p(r)$ denote the least prime divisor of $r$. In particular $p(n)$ is the least prime divisor of $n$.
Second, $q(n,k)$ is the largest divisor of $n$ that is at most $k$ (if no nontrivial divisor is $\le k$, set $q(n,k) = 1$).
We also write, for $t\in\mathbb{Z}_{\ge1}$,
\[
f(t):=
\begin{cases}
0,& t=1,\\[2pt]
1,& t=2,\\[2pt]
t,& t\ge 3,
\end{cases}
\]
a function that matches the restricted self-sumset size of a full coset of a subgroup of size $t$.

It will be convenient to speak about arcs and blocks.
Viewing $\mathbb{Z}_n$ as the circle $\{0,1,\dots,n-1\}$ in cyclic order, an arc is a set of consecutive residues (possibly wrapping around $n-1$ to $0$).
A block is a consecutive set that does not wrap (i.e., of the form $\{s,s+1,\dots,s+t-1\}\ \pmod{n}$).
If $I$ is a block of size $t\ge 2$ and $2t-3\le n$, then the distinct sums with different summands run through an interval of $2t-3$ integers. Hence,
\[
|I \widehat{+} I| = 2t-3 .
\]

Cosets and quotients will play a central role.
If $H\le \mathbb{Z}_n$ is a subgroup of index $q$ (so $|H|=n/q$) and $a\in\mathbb{Z}_n$, then the coset $a+H$ has restricted self-sumset
\[
|(a+H) \widehat{+} (a+H)| = f(|H|) = f(n/q),
\]
and any subset $B\subseteq a + H$ satisfies $B\widehat{+}B \subseteq 2a+H$ (so its restricted sums remain in the same coset).
Given $H$, let $\pi_H:\mathbb{Z}_n\to \mathbb{Z}_n/H$ be the quotient map. We define
\[
r_H(A) := |\pi_H(A)| \quad\text{and}\quad
\sigma_H(A):=\big|\{\text{$H$-cosets }C : |A\cap C|\ge 2\}\big|
\]
as the number of $H$-cosets that $A$ meets, and the number of $H$-cosets where $A$ has at least two elements.

We write $\operatorname{diam}(X):=\max(X)-\min(X)$ for the diameter of a finite $X\subset\mathbb{Z}$, computed in the integers.

Finally, we record the standard restricted Erd\H{o}s--Heilbronn lower bound in this setting:
for all $A\subseteq\mathbb{Z}_n$ with $|A|\ge 2$,
\[
|A \widehat{+} A| \ge \min\{p(n),\ 2|A|-3\},
\]
see \cite{Karolyi2004}. For the prime case see \cite{DiasdaSilvaHamidoune1994}.
This is consistent with our convention $|A \widehat{+} A| = 0$ when $|A|\le 1$.

\subsection{The theorems we show}\label{sec:main}

The proofs are deferred to later sections. Here we record the statements that the paper establishes.

\begin{theorem}\label{thm:prime}
Let $p$ be prime and $2\le k\le p$. Then,
\[
\widehat{\Phi}_k(p) = \max\big\{0,\ 2\left\lceil \tfrac{p}{k}\right\rceil - 3\big\}.
\]
\end{theorem}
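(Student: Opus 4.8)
The plan is to prove the two inequalities $\widehat{\Phi}_k(p)\ge \max\{0,2m-3\}$ and $\widehat{\Phi}_k(p)\le \max\{0,2m-3\}$ separately, where $m=\lceil p/k\rceil$. A single preliminary observation does most of the work: since $k\ge 2$ we have $m\le \lceil p/2\rceil$, so $2m-3\le 2\lceil p/2\rceil-3\le p-2<p$. This strict inequality is the hinge of both directions, because it lets the minimum in the restricted Erd\H{o}s--Heilbronn estimate collapse onto the $2m-3$ term, and it simultaneously guarantees that balanced arcs do not wrap.

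For the lower bound, fix any $k$-coloring $\mathbb{Z}_p=A_1\sqcup\cdots\sqcup A_k$. By pigeonhole some class $A_i$ has $|A_i|\ge m$. If $m=1$ (which forces $k=p$) there is nothing to prove beyond the trivial $\widehat{\Phi}_k(p)\ge 0$. If $m\ge 2$, the restricted Erd\H{o}s--Heilbronn bound (in the prime form of Dias--da Silva--Hamidoune, as recorded in Section~\ref{sec:prelims}) gives $|A_i\widehat{+}A_i|\ge \min\{p,\,2|A_i|-3\}$; since $2|A_i|-3\ge 2m-3$ and $p>2m-3$, both arguments of the minimum are at least $2m-3$, whence $\max_j|A_j\widehat{+}A_j|\ge 2m-3$. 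Minimizing over all colorings yields $\widehat{\Phi}_k(p)\ge \max\{0,2m-3\}$.

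For the upper bound I would exhibit a single explicit coloring. Partition the interval $\{0,1,\dots,p-1\}$ into $k$ consecutive blocks as evenly as possible, so that each block is an arc of size $m$ or $m-1$ and none wraps; writing $p=(m-1)k+r$ with $1\le r\le k$ produces exactly $r$ blocks of size $m$ and $k-r$ of size $m-1$. For a block $I$ of size $t$ we have $2t-3\le 2m-3<p$, so the arc-sumset identity from Section~\ref{sec:prelims} applies and gives $|I\widehat{+}I|=2t-3$ when $t\ge 2$ (and $0$ when $t\le 1$). The maximum over the classes is thus attained on a largest block and equals $2m-3$ when $m\ge 2$, and equals $0$ when $m=1$ (all classes are singletons). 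This coloring certifies $\widehat{\Phi}_k(p)\le \max\{0,2m-3\}$, and combining the two bounds gives equality.

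The argument is short because the two nontrivial ingredients — the sharp restricted sumset lower bound over a prime and the exact arc-sumset count — are already available. The only genuine care required is the boundary bookkeeping: verifying that $2m-3<p$ really does hold for every $k\ge 2$ (so the minimum in the Erd\H{o}s--Heilbronn bound never sits at $p$ and arcs never wrap), and treating the degenerate regime $m=1$, where the truncation $\max\{0,\cdot\}$ is precisely what reconciles the formula value $2m-3=-1$ with the actual value $0$. I do not anticipate a substantive obstacle beyond these checks; the principal conceptual point is simply that balanced arcs are optimal because they meet the Erd\H{o}s--Heilbronn bound with equality.
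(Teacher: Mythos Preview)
Your proof is correct and follows essentially the same route as the paper's: the lower bound comes from pigeonhole plus the Dias--da Silva--Hamidoune inequality, with the observation $2m-3\le p-2<p$ collapsing the minimum, and the upper bound from a balanced block coloring together with the arc-sumset count. The only cosmetic differences are that the paper isolates the $m=1$ case upfront and cites its Lemmas~\ref{lem:block}, \ref{lem:ddsh}, and~\ref{lem:largest-block-placement} explicitly, whereas you fold these into the narrative.
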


\begin{theorem}\label{thm:prime-ext}
Let $p$ be prime and $2\le k\le p$, and put $m=\lceil p/k\rceil$. If $2m-3<p$ and $m\ge5$, then in every optimal $k$-coloring of $\mathbb{Z}_p$ attaining $\widehat{\Phi}_k(p)=2m-3$, there exists a color class $A$ of size $m$ that is an arc (equivalently, an arithmetic progression up to an affine automorphism $x\mapsto ux+v$ with $u\in\mathbb{Z}_p^\times$).
\end{theorem}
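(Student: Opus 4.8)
The plan is to reduce the statement to an extremal-set problem and then apply a stability (inverse) theorem for the restricted sumset. By Theorem~\ref{thm:prime}, since $m\ge5$ forces $2m-3>0$, the optimal value is $\widehat{\Phi}_k(p)=2m-3$. Fix an optimal coloring $\mathbb{Z}_p=A_1\sqcup\cdots\sqcup A_k$. First I would show no class exceeds size $m$: if $|A_i|=s\ge m+1$, then the restricted Erd\H{o}s--Heilbronn bound gives $|A_i\widehat{+}A_i|\ge\min\{p,\,2s-3\}\ge\min\{p,\,2m-1\}$; since $p\ge2m-2\ge8$, the modulus $p$ is an odd prime, so $p\ne2m-2$ and hence $p\ge2m-1$, making this minimum equal to $2m-1>2m-3$ and contradicting optimality. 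Next, a counting step produces a class of size exactly $m$: if every $|A_i|\le m-1$ then $p=\sum_i|A_i|\le k(m-1)<p$, where $k(m-1)<p$ follows from $m=\lceil p/k\rceil$, a contradiction. Fixing such a class $A$ with $|A|=m$ and combining the optimality bound $|A\widehat{+}A|\le2m-3$ with the lower bound $|A\widehat{+}A|\ge\min\{p,\,2m-3\}=2m-3$ (using $2m-3<p$) pins down $|A\widehat{+}A|=2m-3$ exactly.

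Thus $A$ realizes equality in the Dias--da Silva--Hamidoune/Erd\H{o}s--Heilbronn inequality while staying bounded away from the whole group, since $2m-3<p$ is equivalent to $2m-1\le p$. I would then invoke the inverse (stability) theorem for restricted set addition in $\mathbb{Z}_p$, the structural counterpart of \cite{DiasdaSilvaHamidoune1994, Karolyi2004}: for $|A|=m\ge5$ with $|A\widehat{+}A|=2m-3<p$, the set $A$ must be an arithmetic progression. The hypothesis $m\ge5$ is exactly what rules out non-progression extremizers and cannot be dropped, because for $|A|=3$ every three-element set attains $|A\widehat{+}A|=3=2|A|-3$ regardless of structure, and $|A|=4$ admits further sporadic cases; the gap condition $2m-1\le p$ is what keeps the sumset from filling $\mathbb{Z}_p$ and is needed for the inverse theorem to apply.

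Finally I would record the affine equivalence that makes \emph{arc} and \emph{arithmetic progression} interchangeable: for $\phi(x)=ux+v$ with $u\in\mathbb{Z}_p^\times$ one has $\phi(a)+\phi(a')=u(a+a')+2v$, so $s\mapsto us+2v$ is a bijection and $|\phi(A)\widehat{+}\phi(A)|=|A\widehat{+}A|$; since arcs are precisely progressions of common difference $1$ and every progression is an affine image of an arc, the progression delivered by the inverse theorem is an arc up to an affine automorphism. In fact the same sandwich forces $|A_i\widehat{+}A_i|=2m-3$ for \emph{every} class $A_i$ of size $m$, so every such class is an arc, which is stronger than the asserted existence. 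The main obstacle is the inverse theorem itself: proving that the equality $|A\widehat{+}A|=2m-3$ compels an arithmetic progression is the deep input, and the delicate point is checking that its hypotheses---the lower threshold $m\ge5$ and the gap $2m-1\le p$ to the group order---are genuinely met in our regime, rather than the elementary reduction carried out in the first step.
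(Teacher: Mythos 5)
Your proposal is correct and follows essentially the same route as the paper: cap all class sizes at $m$ via the Dias da Silva--Hamidoune bound, extract a class of size exactly $m$ by counting against $k(m-1)<p$, sandwich its restricted sumset at exactly $2m-3$, and invoke K\'arolyi's inverse theorem (Lemma~\ref{lem:inverse-reh}) as a black box. Your added observations (that $p$ odd forces $p\ge 2m-1$, and that \emph{every} size-$m$ class must be an arc) are correct refinements but do not change the argument.
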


\begin{theorem}\label{thm:bounds}
For all $n\ge2$ and $k\ge2$, with $m=\left\lceil \tfrac{n}{k}\right\rceil$,
\[
\max\big\{0,\ \min\{p(n),2m-3\}\big\} \le \widehat{\Phi}_k(n) \le \min\big\{\max(0,2m-3),\ f\!\big(\tfrac{n}{q(n,k)}\big)\big\}.
\]
\end{theorem}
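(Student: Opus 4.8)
The plan is to prove the two inequalities separately, each reducing to an ingredient already available. For the lower bound, I would first observe that $\widehat{\Phi}_k(n)\ge 0$ trivially, as it is a maximum of cardinalities. For the substantive term, fix any $k$-coloring $\mathbb{Z}_n=A_1\sqcup\cdots\sqcup A_k$; by pigeonhole some class satisfies $|A_i|\ge\lceil n/k\rceil=m$. Applying the restricted Erd\H{o}s--Heilbronn bound recorded in Section~\ref{sec:prelims} (see \cite{Karolyi2004}) to that class gives $|A_i\widehat{+}A_i|\ge\min\{p(n),2|A_i|-3\}$, and since $|A_i|\ge m$ implies $2|A_i|-3\ge 2m-3$, this is at least $\min\{p(n),2m-3\}$. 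Hence $\max_i|A_i\widehat{+}A_i|\ge\min\{p(n),2m-3\}$ for every coloring, and taking the infimum over colorings yields $\widehat{\Phi}_k(n)\ge\min\{p(n),2m-3\}$. Combined with nonnegativity (the EH term is vacuous exactly when $m\le 1$, where $2m-3\le-1$), this gives the stated lower bound $\max\{0,\min\{p(n),2m-3\}\}$.

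For the upper bound it suffices to exhibit, for each of the two terms, one $k$-coloring whose worst class does not exceed that term; the value $\widehat{\Phi}_k(n)$ is then at most the minimum. For the term $\max(0,2m-3)$, I would partition the circle $\{0,1,\dots,n-1\}$ into $k$ consecutive blocks of sizes as equal as possible, so that each block $I$ has $|I|=t\le m$. The block computation from Section~\ref{sec:prelims} gives $|I\widehat{+}I|=\max(0,2t-3)$ whenever $2t-3\le n$; since $k\ge2$ forces $m\le\lceil n/2\rceil$ and hence $2t-3\le 2m-3\le n-2$, no modular wraparound collision occurs and the formula applies to every block. Thus every class has $|A_i\widehat{+}A_i|\le\max(0,2m-3)$, giving $\widehat{\Phi}_k(n)\le\max(0,2m-3)$.

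For the term $f(n/q)$ with $q=q(n,k)$, I would use the subgroup structure of $\mathbb{Z}_n$. As $q\mid n$, the subgroup $H=q\mathbb{Z}_n$ has order $n/q$ and its $q$ cosets partition $\mathbb{Z}_n$. Because $q\le k$, there is a $k$-coloring in which every class is contained in a single coset (color the $q$ cosets with distinct colors and leave the remaining classes empty, or refine further to use all $k$ colors). For any $B\subseteq a+H$ we have $B\widehat{+}B\subseteq 2a+H$, so $|B\widehat{+}B|\le|H|=n/q$; checking the cases $n/q\in\{1,2\}$ against the definition of $f$ shows in fact $|B\widehat{+}B|\le f(n/q)$ for every such $B$, with equality when $B$ is a full coset. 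Hence every class has restricted self-sumset at most $f(n/q)$, so $\widehat{\Phi}_k(n)\le f(n/q(n,k))$, and together with the block coloring, $\widehat{\Phi}_k(n)\le\min\{\max(0,2m-3),\,f(n/q(n,k))\}$.

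I expect the only real care to lie in the upper-bound edge cases rather than in any deep estimate. The two points to verify are that the block identity $|I\widehat{+}I|=2t-3$ is not corrupted by wraparound (handled by the margin $2t-3\le n-2$ above), and that passing to a subset of a coset respects, rather than exceeds, the ceiling $f(n/q)$ even when the coset has size $1$ or $2$ (so that $f$, not the raw $|H|$, is the correct bound). Both reduce to short case checks. The degenerate regime $m=1$ (equivalently $k\ge n$) is consistent, since there the lower bound collapses to $0$ while the block and coset constructions place every element in a class of size at most one, and the case $q(n,k)=1$ is consistent because $f(n)=n$ bounds every restricted sumset trivially. No genuinely hard step arises: the result is pigeonhole plus restricted Erd\H{o}s--Heilbronn for the lower bound, and two explicit judicious colorings for the upper bound.
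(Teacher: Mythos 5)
Your proposal is correct and follows essentially the same route as the paper: pigeonhole plus the K\'arolyi restricted Erd\H{o}s--Heilbronn bound for the lower bound, and the block coloring together with the coset coloring along the subgroup of index $q(n,k)$ for the upper bound. The only difference is cosmetic: you verify exact equality $|I\widehat{+}I|=\max(0,2t-3)$ via the no-wraparound margin $2m-3\le n-2$, whereas the paper only needs (and only invokes) the inequality $|I\widehat{+}I|\le\max\{0,2|I|-3\}$ from Lemma~\ref{lem:block} for this direction.
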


\begin{theorem}\label{thm:interval-exact}
If $2\left\lceil \tfrac{n}{k}\right\rceil - 3 \le p(n)$, then
\[
\widehat{\Phi}_k(n) = \max\big\{0,2\left\lceil \tfrac{n}{k}\right\rceil-3\big\}.
\]
\end{theorem}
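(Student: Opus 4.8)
The plan is to read off the conclusion directly from Theorem~\ref{thm:bounds}, observing that the hypothesis $2\lceil n/k\rceil-3\le p(n)$ is precisely the regime in which the lower and upper bounds recorded there collapse to a single value. Writing $m=\lceil n/k\rceil$ throughout, I would first note that the only quantity in Theorem~\ref{thm:bounds} sensitive to the hypothesis is the inner minimum $\min\{p(n),2m-3\}$ on the left.

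For the lower bound, the hypothesis $2m-3\le p(n)$ forces $\min\{p(n),2m-3\}=2m-3$, so the left-hand side of Theorem~\ref{thm:bounds} simplifies to $\max\{0,\,2m-3\}$, giving $\widehat{\Phi}_k(n)\ge\max\{0,2m-3\}$. For the upper bound no hypothesis is needed: the right-hand side of Theorem~\ref{thm:bounds} is a minimum one of whose arguments is $\max(0,2m-3)$, hence it is at most $\max(0,2m-3)$ unconditionally. Combining the two inequalities squeezes $\widehat{\Phi}_k(n)$ between two copies of $\max\{0,2m-3\}$, which is the claimed identity. In this sense the theorem is a corollary of Theorem~\ref{thm:bounds}, the only work being the elementary simplification of the $\min$/$\max$ expressions; there is no genuine obstacle, since the substantive content lives in the earlier bound.

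For a self-contained derivation that avoids invoking the full strength of Theorem~\ref{thm:bounds}, I would instead argue both directions by hand. The lower bound follows from the restricted Erd\H{o}s--Heilbronn inequality applied to any color class of size $\ge m$: since $x\mapsto\min\{p(n),2x-3\}$ is nondecreasing and $|A|\ge m$, each such class satisfies $|A\widehat{+}A|\ge\min\{p(n),2m-3\}=2m-3$ under the hypothesis, whence $\max_i|A_i\widehat{+}A_i|\ge\max\{0,2m-3\}$ for every coloring (the trivial case $m=1$ being absorbed by the $\max\{0,\cdot\}$). The matching upper bound comes from the equal-blocks coloring, partitioning $\mathbb{Z}_n$ into $k$ consecutive blocks of nearly equal sizes, each of size at most $m$; a block $I$ of size $t\le m$ has $|I\widehat{+}I|=\max(0,2t-3)\le\max(0,2m-3)$ by the block formula from Section~\ref{sec:prelims}.

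The one point requiring verification is that this block formula is actually available, i.e.\ that $2t-3\le n$ so that no wraparound collisions inflate the sumset. This is exactly where the hypothesis is used, since $2t-3\le 2m-3\le p(n)\le n$. That check is the only mildly delicate step, and it is immediate; everything else is the routine combination of the size effect and the block construction.
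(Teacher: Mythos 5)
Your argument is correct and follows essentially the same route as the paper: the lower bound is the restricted Erd\H{o}s--Heilbronn bound (via Theorem~\ref{thm:bounds}) simplified under the hypothesis $2m-3\le p(n)$, and the upper bound is the near-equal block coloring, so the theorem is indeed just the regime where the two bounds of Theorem~\ref{thm:bounds} meet. One small correction to your final paragraph: reduction modulo $n$ can only \emph{shrink} $|I\widehat{+}I|$ below $2t-3$, never inflate it, so the inequality $|I\widehat{+}I|\le\max\{0,2t-3\}$ from Lemma~\ref{lem:block} holds unconditionally and the check $2t-3\le n$ is not needed for the upper bound --- it is only needed (and is what the paper verifies via the injectivity window, Lemma~\ref{lem:injective-window}) to show the largest block actually attains $2m-3$.
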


\begin{theorem}\label{thm:coset-exact}
If $f\!\big(\tfrac{n}{q(n,k)}\big) \le \min\big\{p(n),\ 2\left\lceil \tfrac{n}{k}\right\rceil-3\big\}$, then
\[
\widehat{\Phi}_k(n) = f\!\big(\tfrac{n}{q(n,k)}\big).
\]
\end{theorem}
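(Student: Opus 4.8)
The plan is to derive this as an immediate squeeze from the two-sided estimate of Theorem~\ref{thm:bounds}, whose hypothesis is precisely calibrated to collapse the gap between the lower and upper bounds proved there. Write $q=q(n,k)$ and $m=\lceil n/k\rceil$ throughout. Theorem~\ref{thm:bounds} supplies
\[
\max\{0,\,\min\{p(n),2m-3\}\}\ \le\ \widehat{\Phi}_k(n)\ \le\ \min\{\max(0,2m-3),\,f(n/q)\},
\]
while the standing hypothesis is $f(n/q)\le\min\{p(n),2m-3\}$. Since $f\ge0$ by definition, this hypothesis forces $\min\{p(n),2m-3\}\ge0$, which is the only structural fact I will need to simplify both truncations.

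First I would simplify the lower bound. Because $\min\{p(n),2m-3\}\ge0$, the outer maximum with $0$ is inactive, so the lower bound reads simply $\widehat{\Phi}_k(n)\ge\min\{p(n),2m-3\}$. Next I would simplify the upper bound: from $0\le f(n/q)\le 2m-3$ we read off $2m-3\ge0$, whence $\max(0,2m-3)=2m-3$; and since $f(n/q)\le 2m-3$, the inner minimum selects $f(n/q)$. Thus the upper bound reads $\widehat{\Phi}_k(n)\le f(n/q)$.

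Chaining the two simplified bounds with the hypothesis then yields
\[
\min\{p(n),2m-3\}\ \le\ \widehat{\Phi}_k(n)\ \le\ f(n/q)\ \le\ \min\{p(n),2m-3\},
\]
so every inequality in the chain is in fact an equality; in particular $\widehat{\Phi}_k(n)=f(n/q)$, as claimed. (As a byproduct the hypothesis is automatically tight, $f(n/q)=\min\{p(n),2m-3\}$, though this is not needed for the statement.)

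I do not expect a genuine obstacle beyond Theorem~\ref{thm:bounds} itself: all the combinatorial substance---the coset coloring realizing the $f(n/q)$ upper bound and the restricted Erd\H{o}s--Heilbronn lower bound---has already been absorbed there, and here the argument is purely a squeeze. The only point requiring care is the bookkeeping of the $\max(0,\cdot)$ and $\min(\cdot,\cdot)$ wrappers, namely confirming that the hypothesis deactivates the truncation at $0$ in the lower bound and selects $f(n/q)$ in the upper bound; both reductions follow from $f\ge0$ together with $f(n/q)\le\min\{p(n),2m-3\}$.
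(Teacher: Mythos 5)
Your proposal is correct and follows essentially the same route as the paper: both arguments squeeze $\widehat{\Phi}_k(n)$ between the lower bound of Theorem~\ref{thm:bounds} and the coset-coloring upper bound $f\!\big(\tfrac{n}{q(n,k)}\big)$, with the regime hypothesis closing the gap (the paper merely re-derives the coset construction explicitly and adds remarks on the edge cases $t\in\{1,2\}$, while you cite the upper bound of Theorem~\ref{thm:bounds} directly).
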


\begin{remark}
In all other $(n,k)$ the value $\widehat{\Phi}_k(n)$ is not claimed to be exact. By Theorem~\ref{thm:bounds} it lies between the stated lower and upper bounds.
\end{remark}

\begin{theorem}\label{thm:stability-coset-ineq}
Let $H\le \mathbb{Z}_n$ have size $t\ge 3$, and let $A\subseteq \mathbb{Z}_n$.
Choose a coset $C=a+H$ maximizing $x:=|A\cap C|$, and set $r:=r_H(A)-1$ (the number of occupied $H$-cosets other than $C$).
Define
\[
\alpha^\star:=\max\{0,\ \min\{p(t),\ 2x-3\}\}.
\]
Then,
\[
\big|A\widehat{+}A\big| \ge \alpha^\star + rx.
\]
\end{theorem}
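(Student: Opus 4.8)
The plan is to decompose $A\widehat{+}A$ according to which $H$-coset each restricted sum lands in, and to exhibit $r+1$ pairwise distinct target cosets, each carrying a controlled number of sums. The guiding observation is that if $a\in a_i+H$ and $a'\in a_j+H$ then $a+a'\in(a_i+a_j)+H$, so restricted sums are organized by the cosets of their summands. I would harvest the two terms $\alpha^\star$ and $rx$ from two disjoint pools of such sums and add them.

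First I would account for the $\alpha^\star$ term using only sums internal to the heaviest coset $C=a+H$. Translating by $-a$ identifies $A\cap C$ with a subset $A'\subseteq H$ of size $x$, and since every subgroup of a cyclic group is cyclic, $H\cong\mathbb{Z}_t$ with least prime divisor $p(t)$. Applying the restricted Erd\H{o}s--Heilbronn bound inside $H$ gives $|A'\widehat{+}A'|\ge\min\{p(t),2x-3\}$ whenever $x\ge2$, and the convention $|B\widehat{+}B|=0$ for $|B|\le1$ matches $\alpha^\star=0$ in the degenerate range $x\le1$; in all cases $|(A\cap C)\widehat{+}(A\cap C)|\ge\alpha^\star$. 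These sums occupy the single coset $2a+H$.

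Next I would produce $x$ further sums from each of the $r$ remaining occupied cosets. For each such coset $C_j=a_j+H$ I would fix one representative $b_j\in A\cap C_j$ and form the translates $\{b_j+a':a'\in A\cap C\}$. Because $b_j$ and $a'$ lie in different cosets they are automatically distinct, so each of these $x$ sums is a legitimate element of $A\widehat{+}A$, and the injectivity of $a'\mapsto b_j+a'$ yields exactly $x$ of them, all inside $(a+a_j)+H$.

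The crux---and the only place requiring care---is verifying that the $r+1$ target cosets $2a+H$ and $(a+a_j)+H$ $(1\le j\le r)$ are pairwise distinct, for then the corresponding sum-pools are disjoint in $\mathbb{Z}_n$ and their sizes simply add. This reduces to the equivalences $2a+H=(a+a_j)+H\iff C=C_j$ and $(a+a_i)+H=(a+a_j)+H\iff C_i=C_j$, both of which fail by the choice of distinct occupied cosets. Summing the contributions then gives $|A\widehat{+}A|\ge\alpha^\star+rx$. I do not anticipate a genuine obstacle here; the subtle points are using $p(t)$ rather than $p(n)$ in the Erd\H{o}s--Heilbronn step and keeping the internal pool (in $2a+H$) separate from the cross pools.
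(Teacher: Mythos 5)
Your proposal is correct and follows essentially the same route as the paper's proof: restricted Erd\H{o}s--Heilbronn inside the heaviest coset (translated into $H\cong\mathbb{Z}_t$, hence with $p(t)$) for the $\alpha^\star$ term, one representative per remaining occupied coset translated across $A\cap C$ for the $rx$ term, and the injectivity of $D\mapsto D+C$ on $H$-cosets to keep the $r+1$ sum-pools disjoint. No gaps.
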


\begin{theorem}\label{thm:stability-coset-threshold}
In the setting of Theorem~\ref{thm:stability-coset-ineq}, suppose $\big|A \widehat{+} A\big|\le t+s$ for some integer $s\ge 0$, where $t=|H|$.
Let $x=|A\cap C|$ for a heaviest coset $C$, and define $\alpha^\star:=\max\{0,\ \min\{p(t),\ 2x-3\}\}$.
Then
\[
r \le \frac{t+s-\alpha^\star}{x}.
\]
In particular, if $2x-3\le p(t)$ and $3x>t+s+3$, then $r=0$ and hence $A\subseteq C$.
\end{theorem}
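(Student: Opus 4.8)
The plan is to read off both conclusions directly from the structural inequality of Theorem~\ref{thm:stability-coset-ineq}, so the argument is essentially a rearrangement. First I would invoke that inequality, which in the present notation reads $|A\widehat{+}A|\ge \alpha^\star + rx$. Combining it with the standing hypothesis $|A\widehat{+}A|\le t+s$ gives $\alpha^\star + rx\le t+s$, hence $rx\le t+s-\alpha^\star$. Since $C$ is a heaviest coset and $A$ is nonempty, $x\ge 1$, so dividing by $x$ yields the claimed bound $r\le (t+s-\alpha^\star)/x$. This is the main inequality, and it needs nothing beyond Theorem~\ref{thm:stability-coset-ineq} and the hypothesis.

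For the \emph{in particular} clause I would first pin down $\alpha^\star$ under the two extra hypotheses. The condition $3x>t+s+3$, together with $t\ge 3$ and $s\ge 0$, forces $3x>6$ and hence $x\ge 3$; in particular $2x-3\ge 3>0$. Combined with $2x-3\le p(t)$ this gives $\min\{p(t),2x-3\}=2x-3$, so
\[
\alpha^\star=\max\{0,\,2x-3\}=2x-3 .
\]
Substituting this into the main bound produces
\[
r\le \frac{t+s-(2x-3)}{x}=\frac{t+s+3-2x}{x}.
\]

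It remains to read off $r=0$. The hypothesis $3x>t+s+3$ rearranges to $t+s+3-2x<x$, so the right-hand side above is strictly less than $1$. Since $r=r_H(A)-1$ is a nonnegative integer, $r<1$ forces $r=0$, that is $r_H(A)=1$: the set $A$ meets exactly one $H$-coset. As $C$ is that coset, $A\subseteq C$, as desired. There is no genuine obstacle here, since all the content is front-loaded into Theorem~\ref{thm:stability-coset-ineq}; the only points demanding care are confirming $x\ge 1$ so the division is legitimate, and checking that the extra hypotheses select the branch $\alpha^\star=2x-3$ rather than $0$ or $p(t)$, after which the integrality of $r$ closes the argument.
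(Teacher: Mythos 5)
Your proposal is correct and follows essentially the same route as the paper: invoke Theorem~\ref{thm:stability-coset-ineq}, rearrange against the hypothesis $|A\widehat{+}A|\le t+s$, and use integrality of $r$ to conclude $r=0$. Your extra care in deriving $x\ge 3$ (hence $\alpha^\star=2x-3$ rather than $0$) from $3x>t+s+3$ is a slight refinement of the paper's wording, but the argument is the same.
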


\begin{remark}[Edge cases $|A|\in\{0,1,2\}$ and $|H|\in\{1,2\}$]
Our conventions give $|A\widehat{+}A|=0$ for $|A|\le1$ and $|A \widehat{+} A|=1$ for $|A|=2$.
For subgroups of size $1$ or $2$ we use $f(1)=0$ and $f(2)=1$, and the periodic statements adapt with these values.
\end{remark}

\section{Bounds and exact regimes}\label{sec:proofs}

\subsection{Lemmas for Section \ref{sec:proofs}}
We collect the elementary tools we use, and the two standard restricted-sumset lower bounds.

\begin{lemma}\label{lem:block}
Let $I\subseteq \mathbb{Z}_n$ be a non-wrapping block of consecutive residues of size $t\ge 0$.
Choose representatives $I^\ast=\{s,s+1,\dots,s+t-1\}\subset\mathbb{Z}$.
Then the set of integer sums with distinct summands
\[
S:=\{x+y:\ x,y\in I^\ast,\ x\neq y\}
\]
has cardinality $|S|=\max\{0,2t-3\}$.
Reducing modulo $n$ yields
\[
|I\widehat{+}I|\ \le\ \max\{0,2t-3\},
\]
with equality if and only if $2t-3\le n$.
\end{lemma}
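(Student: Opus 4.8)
The plan is to compute the integer sumset $S$ exactly and then pass to $\mathbb{Z}_n$ by analyzing when reduction modulo $n$ is injective on $S$. First I would dispose of the degenerate cases $t\in\{0,1\}$: then $I^\ast$ has at most one element, there are no admissible pairs, so $S=\emptyset$ and $|S|=0=\max\{0,2t-3\}$. The inequality is then immediate, and since $2t-3\le-1<n$ holds vacuously, the equality clause is satisfied as well. So for the rest I assume $t\ge2$.

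For the main case, write $x=s+i$ and $y=s+j$ with $i,j\in\{0,1,\dots,t-1\}$ and $i\ne j$, so that $x+y=2s+(i+j)$. The task reduces to identifying the index-sum set $D:=\{\,i+j:\ 0\le i,j\le t-1,\ i\ne j\,\}$. The extremal admissible pairs $\{0,1\}$ and $\{t-2,t-1\}$ give $\min D=1$ and $\max D=2t-3$, so $D\subseteq\{1,\dots,2t-3\}$. For the reverse inclusion I would exhibit a valid representation for each target $c$ in this range: for $1\le c\le t-1$ take $\{0,c\}$, and for $t-1\le c\le 2t-3$ take $\{t-1,\,c-t+1\}$; in each case one checks the two indices lie in $\{0,\dots,t-1\}$ and are distinct. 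These families overlap at $c=t-1$ and together cover $\{1,\dots,2t-3\}$, so $D=\{1,\dots,2t-3\}$ and hence $S=\{2s+1,\dots,2s+2t-3\}$ is a block of exactly $2t-3$ consecutive integers, giving $|S|=\max\{0,2t-3\}$.

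Finally I would reduce modulo $n$. Since $I\widehat{+}I$ is the image of $S$ under the reduction map, we immediately obtain $|I\widehat{+}I|\le|S|=\max\{0,2t-3\}$, which is the stated inequality. For the equality characterization, the key point is that reduction modulo $n$ restricted to a set of $L$ consecutive integers is injective exactly when $L\le n$ (if $L\le n$ no two of them differ by a nonzero multiple of $n$; if $L\ge n+1$ the first and last differ by at least $n$, forcing a collision). Here $L=2t-3$: if $2t-3\le n$ the reduction is injective and $|I\widehat{+}I|=2t-3$, whereas if $2t-3>n$ the block has length exceeding $n$, so its image is all of $\mathbb{Z}_n$ and $|I\widehat{+}I|=n<2t-3$. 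This yields equality if and only if $2t-3\le n$, consistent with the trivial case already handled.

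The only step with real content is the covering argument for $D$; everything else is bookkeeping. The place to stay vigilant is the distinctness constraint $i\ne j$ together with the boundary overlap of the two index families at $c=t-1$, which is precisely what makes the count $2t-3$ rather than $2t-1$: the two ``lost'' sums are the forced repeats $0=0+0$ and $2t-2=(t-1)+(t-1)$ at the ends, while every intermediate value retains a representation with distinct summands.
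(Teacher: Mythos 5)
Your proof is correct and follows essentially the same route as the paper's: identify $S$ as the contiguous integer interval $\{2s+1,\dots,2s+2t-3\}$ and then characterize when reduction modulo $n$ is injective on an interval of that length. You simply spell out the covering argument for the index-sum set in more detail than the paper, which asserts the interval directly.
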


\begin{proof}
If $t\le 1$, there are no distinct pairs, so the integer count is $0$.
For $t\ge 2$, writing $I^\ast=\{s,s+1,\dots,s+t-1\}$ in the integers, the distinct sums $x+y$ with $x\neq y\in I^\ast$
range over the contiguous interval $\{2s+1,2s+2,\dots,2s+2t-3\}$, giving exactly $2t-3$ values. Hence, $|S|=2t-3$.
The image of $S$ modulo $n$ is precisely $I\widehat{+}I$, so $|I\widehat{+}I|\le |S|=\max\{0,2t-3\}$.
If $2t-3\le n$, then the interval $\{2s+1,\dots,2s+2t-3\}$ has diameter $2t-4\le n-1$, so no two distinct elements can differ by $n$, and reduction modulo $n$ is injective on $S$, yielding equality.
Conversely, if $2t-3\ge n+1$ then the diameter is at least $n$, and the interval contains two elements differing by $n$, forcing a collision and strict inequality. Thus, equality holds if and only if $2t-3\le n$.
\end{proof}

\begin{lemma}\label{lem:coset}
Let $H\le \mathbb{Z}_n$ be a subgroup of size $t$, and let $a\in \mathbb{Z}_n$.
Then
\[
|(a+H)\widehat{+}(a+H)| = f(t)
\quad\text{where}\quad
f(t)=\begin{cases}
0,&t=1,\\
1,&t=2,\\
t,&t\ge 3.
\end{cases}
\]
Additionally, for any $B\subseteq a+H$, we have $|B\widehat{+}B|\le |(a+H)\widehat{+}(a+H)|=f(t)$.
\end{lemma}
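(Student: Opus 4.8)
The plan is to reduce the computation to the self-sumset of $H$ itself, then use closure of $H$ under addition together with a one-line counting argument on the doubling map $h\mapsto 2h$. First I would note that translation by $2a$ identifies the two sumsets: since $a+h=a+h'$ exactly when $h=h'$, distinct pairs in $a+H$ correspond to distinct pairs in $H$, and $(a+h)+(a+h')=2a+(h+h')$. Thus $(a+H)\widehat{+}(a+H)=2a+(H\widehat{+}H)$ and $|(a+H)\widehat{+}(a+H)|=|H\widehat{+}H|$, so it suffices to evaluate $|H\widehat{+}H|$. Because $H$ is closed under addition, $H\widehat{+}H\subseteq H$, which gives the upper bound $|H\widehat{+}H|\le t$ for free. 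The cases $t=1$ and $t=2$ I would dispatch by inspection: for $t=1$ there are no distinct pairs, so the count is $0=f(1)$; for $t=2$, writing $H=\{0,d\}$, the unique distinct sum is $0+d=d$, so $|H\widehat{+}H|=1=f(2)$.

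The substance lies in the case $t\ge3$, where I claim the equality $H\widehat{+}H=H$. Since the inclusion $\subseteq$ already holds, it remains to show every $g\in H$ is a sum of two \emph{distinct} elements of $H$. For any $h\in H$, its complement $g-h$ lies in $H$ by closure, so the pair $(h,g-h)$ sums to $g$ and fails to be admissible only when $h=g-h$, i.e.\ $2h=g$. As $H$ is cyclic of order $t$, the doubling map on $H\cong\mathbb{Z}_t$ has kernel of size $\gcd(2,t)\le2$, so the equation $2h=g$ has at most $\gcd(2,t)\le2$ solutions. Since $t\ge3$ strictly exceeds this, at least one $h\in H$ satisfies $2h\ne g$, producing an admissible pair and hence $g\in H\widehat{+}H$. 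Therefore $|H\widehat{+}H|=t=f(t)$, which settles the equality across all three ranges.

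For the final clause I would invoke monotonicity: if $B\subseteq a+H$, then every distinct pair in $B$ is also a distinct pair in $a+H$, so $B\widehat{+}B\subseteq(a+H)\widehat{+}(a+H)$ and hence $|B\widehat{+}B|\le f(t)$. The only step demanding any care is the doubling-map count in the case $t\ge3$---concretely the inequality $\gcd(2,t)<t$, which is precisely where the hypothesis $t\ge3$ enters and is the crux separating this case from the degenerate ranges $t\le2$.
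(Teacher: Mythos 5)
Your proposal is correct and follows essentially the same route as the paper: translate to reduce to $H\widehat{+}H$, handle $t\le 2$ by inspection, and for $t\ge 3$ show $H\widehat{+}H=H$ by observing that for each $g\in H$ at most $\gcd(2,t)\le 2<t$ choices of $h$ fail the distinctness condition $h\ne g-h$. Your doubling-map count merely makes explicit the step the paper states as ``possible since $|H|\ge 3$.''
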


\begin{proof}
If $t=1$, the coset has one element and there are no distinct pairs.
If $t=2$, the coset is $\{a,a+h\}$ with $h\neq 0$ of order $2$. Hence, the only distinct sum is $2a+h$, so size is $1$.
If $t\ge 3$, fix $x\in H$. Choose $u\in H$ with $u\ne x-u$ (possible since $|H|\ge 3$), then
$x= u+(x-u)$ is a sum of two distinct elements of $H$. Hence, $x\in H\widehat{+}H$.
This implies $H\widehat{+}H=H$, and by translation $(a+H)\widehat{+}(a+H)=2a+H$, so the size is $t$.
The subset claim is immediate, since $B \widehat{+} B\subseteq (a+H) \widehat{+} (a+H)$.
\end{proof}

\begin{lemma}\label{lem:ddsh}
Let $p$ be prime and $A\subseteq \mathbb{Z}_p$ with $|A|\ge 2$. Then
\[
|A \widehat{+} A| \ge \min\{p,\ 2|A|-3\}.
\]
This is the Dias da Silva--Hamidoune bound \cite{DiasdaSilvaHamidoune1994}.
\end{lemma}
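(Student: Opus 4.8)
The plan is to prove the bound by the polynomial method, that is, via Alon's Combinatorial Nullstellensatz, which I regard as the cleanest self-contained route; the original argument of Dias da Silva and Hamidoune through the representation theory of the symmetric group would also work but is considerably heavier. First I would dispose of the degenerate cases: if $p=2$ then $|A|\ge 2$ forces $A=\mathbb{Z}_2$ and $|A\widehat{+}A|=1=\min\{2,\,2|A|-3\}$, while for $|A|=2$ the restricted sumset is a single element. So I may assume $p$ is odd and $|A|=m\ge 2$. Arguing by contradiction, I suppose $|A\widehat{+}A|<\min\{p,\,2m-3\}$, write $B:=A\widehat{+}A$ and $C:=|B|$, so that $C\le 2m-4$ and $C\le p-1$.

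The key construction is the polynomial
\[
f(x,y):=(x-y)\prod_{b\in B}(x+y-b)\ \in\ \mathbb{F}_p[x,y],
\]
of total degree $C+1$. By design $f$ vanishes on all of $A\times A$: if $x=y$ the leading factor kills it, and if $x\ne y$ then $x+y\in B$, so one factor of the product vanishes. Its top-degree homogeneous part is $(x-y)(x+y)^C$, in which the coefficient of $x^{t_1}y^{t_2}$ with $t_1+t_2=C+1$ equals $\binom{C}{t_1-1}-\binom{C}{t_1}$. Because $C<p$, every $\binom{C}{j}$ with $0\le j\le C$ is a unit in $\mathbb{F}_p$, and comparing the two consecutive binomials shows this coefficient is nonzero in $\mathbb{F}_p$ whenever $2t_1\not\equiv C+1\pmod p$.

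It remains to select exponents $t_1,t_2\le m-1$ with $t_1+t_2=C+1$ and $2t_1\not\equiv C+1\pmod p$, and then to invoke the Nullstellensatz. Since $C+1\le 2m-3=2(m-1)-1$, the admissible range of $t_1$ (those with both $t_1\le m-1$ and $t_2=C+1-t_1\le m-1$) contains at least two consecutive integers; as $p$ is odd, their doubles differ by $2\not\equiv 0$, so at most one value is forbidden and a good pair $(t_1,t_2)$ survives. With the corresponding top monomial carrying a nonzero coefficient and $|A|=m>t_1$, $|A|=m>t_2$, the Combinatorial Nullstellensatz produces $a,a'\in A$ with $f(a,a')\ne 0$, contradicting the vanishing of $f$ on $A\times A$. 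The same computation covers the saturated regime $2m-3\ge p$ verbatim: there one instead assumes $A\widehat{+}A\ne\mathbb{Z}_p$, so again $C\le p-1$, and concludes $A\widehat{+}A=\mathbb{Z}_p$. The one step I expect to need care---and the only genuine obstacle---is verifying the Nullstellensatz hypotheses exactly: confirming that the admissible exponent block is nonempty with room to dodge the single congruence $2t_1\equiv C+1$, and that the bound $C<p$ keeps all binomial coefficients invertible, so that ``nonzero over $\mathbb{Z}$'' upgrades to ``nonzero in $\mathbb{F}_p$.''
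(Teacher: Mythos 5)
Your proof is correct, but it takes a genuinely different route from the paper: the paper does not prove this lemma at all, invoking it as a black box with a citation to Dias da Silva--Hamidoune (and noting the polynomial-method proof of Alon--Nathanson--Ruzsa only as a pointer in a remark). What you have written is essentially that ANR argument in full, and the details check out. The polynomial $f(x,y)=(x-y)\prod_{b\in B}(x+y-b)$ does vanish on $A\times A$, its top homogeneous part is $(x-y)(x+y)^C$, and the coefficient $\binom{C}{t_1-1}-\binom{C}{t_1}=\binom{C}{t_1-1}\,\tfrac{2t_1-C-1}{t_1}$ is indeed a unit in $\mathbb{F}_p$ when $C\le p-1$ and $2t_1\not\equiv C+1\pmod p$; your counting of the admissible exponent window $\max\{0,C+2-m\}\le t_1\le\min\{C+1,m-1\}$ gives it at least $2m-C-2\ge 2$ consecutive integers whenever $C\le 2m-4$, which holds both in the unsaturated case ($C<2m-3$) and in the saturated case ($C\le p-1\le 2m-4$), so the single forbidden congruence class can always be dodged for odd $p$. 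The Nullstellensatz hypotheses ($\deg f=t_1+t_2$, nonzero top coefficient, $|A|>t_i$) are then met, yielding the contradiction. What the two approaches buy: the paper's citation keeps Section~3 short and lets the lemma serve purely as input to the minimax arguments, whereas your proof makes the bound self-contained and elementary modulo the Combinatorial Nullstellensatz, avoiding the exterior-algebra/representation-theoretic machinery of the original Dias da Silva--Hamidoune argument. One cosmetic remark: the clause ``the same computation covers the saturated regime verbatim'' deserves the one-line justification you only gesture at, namely that $p\le 2m-3$ forces $C\le p-1\le 2m-4$, which is exactly the inequality the exponent-window count needs.
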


\begin{lemma}\label{lem:karolyi}
Let $n\ge 2$ and $A\subseteq \mathbb{Z}_n$ with $|A|\ge 2$.
Then,
\[
|A \widehat{+} A| \ge \min\{p(n),\ 2|A|-3\}.
\]
This bound was proved by K{\'a}rolyi \cite{Karolyi2004}.
\end{lemma}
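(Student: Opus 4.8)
The plan is to bootstrap from the prime case (Lemma~\ref{lem:ddsh}) by one quotient together with strong induction on $n$. Write $p=p(n)$, assume $|A|\ge 2$, and let $T:=\min\{p,\,2|A|-3\}$ be the target. The base case $n=p$ is Lemma~\ref{lem:ddsh}. For the inductive step let $K\le\mathbb{Z}_n$ be the subgroup of order $p$ and $\phi\colon\mathbb{Z}_n\to Q:=\mathbb{Z}_n/K\cong\mathbb{Z}_{n/p}$ the quotient, so that each fiber $\phi^{-1}(q)$ is a $K$-coset and hence a copy of $\mathbb{Z}_p$. The point of choosing the \emph{least} prime is that every prime divisor of $n/p$ is $\ge p$, i.e.\ $p(n/p)\ge p$, so the inductive bound obtained in $Q$ is never weaker than what we need in $\mathbb{Z}_n$. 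Put $\bar A:=\phi(A)$, let $\ell:=|\bar A|$ be the number of occupied fibers, and $a_q:=|A\cap\phi^{-1}(q)|\le p$, so that $\sum_q a_q=|A|$.

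Two regimes close immediately. If $\ell=1$ then $A$ lies in a single coset of $K\cong\mathbb{Z}_p$; translating and applying Lemma~\ref{lem:ddsh} gives $|A\widehat{+}A|\ge\min\{p,2|A|-3\}=T$. If instead $2\ell-3\ge p$ (in particular $\ell\ge3$), the inductive hypothesis applied to $\bar A\subseteq Q$ yields $|\bar A\widehat{+}\bar A|\ge\min\{p(n/p),2\ell-3\}\ge p$; since elements in distinct fibers are automatically distinct, every $\bar s\in\bar A\widehat{+}\bar A$ is $\phi(a+a')$ for some genuine restricted sum $a+a'$, so $A\widehat{+}A$ meets at least $p$ fibers and $|A\widehat{+}A|\ge p\ge T$.

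The crux is the intermediate regime $2\le\ell$ with $2\ell-3<p$, where $A$ is concentrated in few fibers: cross-fiber sums alone occupy at least $2\ell-3$ sum-fibers (this is $\min\{p(n/p),2\ell-3\}$ from the inductive bound in $Q$), which falls short of $2|A|-3$, so one must harvest the within-fiber mass. The two ingredients are: (i) the quotient count, giving $\ge 2\ell-3$ occupied sum-fibers; and (ii) a per-fiber gain in the prime-order fibers --- across distinct $q\ne q'$ the Cauchy--Davenport theorem gives $|A_q+A_{q'}|\ge\min\{p,\,a_q+a_{q'}-1\}$ in the fiber $q+q'$, while on the diagonal Lemma~\ref{lem:ddsh} gives $|A_q\widehat{+}A_q|\ge\min\{p,\,2a_q-3\}$ in the fiber $2q$. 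I would combine these as a weighted addition estimate: fix a system of representations $s=q+q'$ charging each covered sum-fiber exactly once, order the occupied classes of $Q$ so that $\bar A\widehat{+}\bar A$ is built up along a monotone chain, and assign to each newly covered sum-fiber the corresponding Cauchy--Davenport/DDSH increment, arranging the increments to telescope to $2\sum_q a_q-3=2|A|-3$ (or to saturate at $p$). The main obstacle is precisely this bookkeeping: distinct representations of the same $s$ land in one fiber, so the per-fiber bounds cannot simply be summed, and they must be realized along a single chain without collisions while controlling the crossover where the running total meets the cap $p$. The clean device is a Kneser/Pollard-type refinement of Cauchy--Davenport in the fibers fed by the inductive quotient bound --- equivalently, K\'arolyi's original reduction \cite{Karolyi2004} --- and the genuinely delicate point is that the within-fiber gains and the cross-fiber count are coupled, so a naive sum over-counts and the chain must be chosen to make the two accountings compatible.
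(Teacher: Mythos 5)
The paper does not prove this lemma at all: it is imported as a black box from K\'arolyi \cite{Karolyi2004} (see the remark following Lemma~\ref{lem:inverse-reh}), so any comparison is between your attempt and the cited literature rather than an in-paper argument. Your two easy regimes are fine: $\ell=1$ reduces to Lemma~\ref{lem:ddsh} in a coset of the order-$p$ subgroup, and $2\ell-3\ge p$ follows from the inductive bound in the quotient together with the observation that elements of distinct fibers are automatically distinct (and the choice of the \emph{least} prime does guarantee $p(n/p)\ge p$).

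The gap is the intermediate regime $2\le\ell$, $2\ell-3<p$, which is exactly where the theorem lives, and there you have a plan rather than a proof. You correctly identify the obstruction (several representations $q+q'$ of the same sum-fiber force you to take unions, not sums, so the Cauchy--Davenport/DDSH increments cannot simply be added, and the crossover at the cap $p$ must be controlled), but the ``monotone chain of sum-fibers with telescoping increments'' is never constructed, and no lemma is stated that would make the telescoping to $2\sum_q a_q-3$ come out. That this bookkeeping cannot be skipped is visible already in simple configurations: with $\ell$ occupied fibers each of size $x$, the heaviest-coset accounting of Theorem~\ref{thm:stability-coset-ineq} yields only $(2x-3)+(\ell-1)x$, which is strictly less than the target $2\ell x-3$ for every $\ell\ge2$ and $x\ge1$, so one genuinely must harvest gains from \emph{all} sum-fibers simultaneously while handling collisions (and, when $p=2$, the non-injectivity of $C\mapsto 2C$, though that subcase is trivial directly since $|A\widehat{+}A|\ge2$ whenever $|A|\ge3$). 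Finally, closing the argument by appealing to ``K\'arolyi's original reduction \cite{Karolyi2004}'' is circular here, since that reduction is precisely the statement being proved. As written, the proposal is an outline of a known hard proof, not a proof.
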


\begin{lemma}\label{lem:inverse-reh}
Let \(p\) be prime and \(A\subseteq\mathbb{Z}_p\) with \(|A|=m\) and \(2m-3<p\).
If \(|A\widehat{+}A|=2m-3\), then \(A\) is an arithmetic progression (equivalently, an arc up to an affine automorphism \(x\mapsto ux+v\) with \(u\in\mathbb{Z}_p^\times\)).
This is the inverse (equality) case due to K{\'a}rolyi \cite{Karolyi2005Inverse}.
\end{lemma}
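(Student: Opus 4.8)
The plan is to prove this inverse statement by the polynomial method, reading the progression structure directly off an equality-forced vanishing. For $m\le 2$ the conclusion is immediate (any set of size $\le 2$ is trivially an arc, and $m=1$ is vacuous since $2m-3<0$), so I assume $m\ge 3$; then $p>2m-3\ge 3$, so $p$ is odd and $x\mapsto 2x$ is invertible on $\mathbb{Z}_p$. Because both $|A\widehat{+}A|$ and the property of being an arc up to an affine map are invariant under $x\mapsto ux+v$ with $u\in\mathbb{Z}_p^\times$, I am free to normalize $A$.

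Write $S:=A\widehat{+}A$ with $|S|=2m-3$ and $\phi(x):=\prod_{a\in A}(x-a)$, of degree $m$. The engine is the antisymmetric polynomial
\[
P(x,y):=(x-y)\prod_{s\in S}(x+y-s)\in\mathbb{F}_p[x,y],
\]
of total degree $2m-2$. By construction $P$ vanishes on all of $A\times A$: off the diagonal the product vanishes because every sum of two distinct elements of $A$ lies in $S$, and on the diagonal the factor $(x-y)$ vanishes. Hence $P$ lies in the vanishing ideal of the grid $A\times A$, which is $(\phi(x),\phi(y))$. The hypothesis $2m-3<p$ is exactly what makes the coefficients governing the top homogeneous part $(x-y)(x+y)^{2m-3}$ nonzero in $\mathbb{F}_p$ (each $\binom{2m-3}{j}$ is a unit modulo $p$ since $2m-3<p$), so the forced vanishing of $P$ modulo the ideal is informative rather than vacuous.

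From here I would extract the structure by reduction and coefficient comparison: writing $P=\phi(x)\,Q(x,y)+\phi(y)\,R(x,y)$ and matching the monomials $x^iy^j$ with $0\le i,j\le m-1$ turns $P\equiv 0$ into a system of identities among the power sums $\sum_{a\in A}a^{\ell}$ of $A$ (via Newton's identities, equivalently among the elementary symmetric functions). The assertion to be proved is that, for $2m-3<p$, the only solutions are the arithmetic progressions, i.e. $\phi(x)=\prod_{i=0}^{m-1}\bigl(x-(a_0+id)\bigr)$ after an affine normalization. I expect this last step---converting ideal membership into closed identities and showing that progressions are their only solutions---to be the main obstacle, as it is where all the arithmetic specific to $\mathbb{Z}_p$ enters.

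As a consistency check and an alternative endgame, one may instead try to force the ordinary sumset to be minimal, $|A+A|=2m-1$, and then invoke Vosper's inverse theorem. I would caution that this route conceals the same difficulty: with $2\cdot A:=\{2a:a\in A\}$ one has $A+A=(A\widehat{+}A)\cup 2\cdot A$, so the defect $|2\cdot A\setminus A\widehat{+}A|$ equals $|A+A|-(2m-3)$, and Cauchy--Davenport (refined by Pollard's theorem) only bounds this defect \emph{from below} by $2$, whereas the progression conclusion is equivalent to the matching upper bound. No soft inequality therefore replaces the structural input above. For the Vosper finish, parity keeps things clean: since $2m-3$ is odd and $p$ is odd, Cauchy--Davenport already gives $2m-1\le p$, and the only value outside Vosper's range $2m-1\le p-2$ is $2m-1=p$ (that is, $A+A=\mathbb{Z}_p$), which is handled by passing to the complement of $A$.
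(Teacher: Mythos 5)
The first thing to say is that the paper offers no proof of this lemma to compare against: it is explicitly used as a black box (see the remark following Lemma~\ref{lem:inverse-reh}), citing K\'arolyi's inverse theorem for restricted set addition. What you have written is therefore an attempt to reprove a genuinely difficult external theorem, and as it stands it is a plan rather than a proof: the decisive step --- deducing from $P\in(\phi(x),\phi(y))$ that $\phi$ factors as $\prod_{i=0}^{m-1}(x-(a_0+id))$ --- \emph{is} the theorem, and you defer it (``I would extract\dots'', ``I expect this last step to be the main obstacle''). Everything before that point is the standard Alon--Nathanson--Ruzsa setup for the \emph{lower} bound, and by itself it carries no structural information: the membership $P\in(\phi(x),\phi(y))$ holds automatically for \emph{every} $A$ with $A\widehat{+}A\subseteq S$, and the one coefficient a Combinatorial Nullstellensatz argument on the $m\times m$ grid would interrogate, namely that of $x^{m-1}y^{m-1}$ in the top homogeneous part $(x-y)(x+y)^{2m-3}$, equals $\binom{2m-3}{m-2}-\binom{2m-3}{m-1}=0$ identically in $\mathbb{Z}$ --- it is not ``made nonzero'' by $2m-3<p$, as you assert. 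So the top-degree comparison is vacuous, and all of the content lies in the system of lower-order identities you have not written down or solved.

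There is also a structural reason the sketch cannot close as organized: the statement you set out to prove under the sole assumption $m\ge 3$ is false for $m=3$ and $m=4$. Every $3$-element set satisfies $|A\widehat{+}A|=3=2m-3$, yet for $p\ge 7$ most $3$-element sets are not arithmetic progressions (e.g.\ $\{0,1,3\}\subset\mathbb{Z}_7$); and $\{0,1,3,4\}\subset\mathbb{Z}_{11}$ has restricted sumset $\{1,3,4,5,7\}$ of size $5=2\cdot 4-3$ without being a progression. K\'arolyi's inverse theorem carries the hypothesis $|A|\ge 5$, and the only place the paper invokes this lemma (Theorem~\ref{thm:prime-ext}) assumes $m\ge 5$; any correct argument must use that hypothesis somewhere, and yours never does. (This also flags that the lemma as printed should state $m\ge5$.) Your alternative Vosper route founders on the same rock, as you candidly note: one needs the \emph{upper} bound $|2\cdot A\setminus(A\widehat{+}A)|\le 2$, equivalently $|A+A|=2m-1$, and no soft counting supplies it. The honest options are to keep the lemma as a citation, or to reproduce K\'arolyi's full coefficient analysis, in which the restriction $m\ge5$ is exactly where the sporadic small configurations are excluded.
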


\begin{remark}
Lemmas~\ref{lem:ddsh} \cite{DiasdaSilvaHamidoune1994}, \ref{lem:karolyi} \cite{Karolyi2004}, and \ref{lem:inverse-reh} \cite{Karolyi2005Inverse} are used as black boxes. 
For an alternative proof of Lemma~\ref{lem:ddsh} over \(\mathbb{Z}_p\) via the polynomial method, see \cite{AlonNathansonRuzsa1996}.
For \(|A|\le 1\) the bounds are consistent with \(|A\widehat{+}A|=0\).
\end{remark}

\begin{lemma}\label{lem:injective-window}
Let $n\ge2$. Let $S\subset \mathbb{Z}$ be a set of integers contained in an interval of length $<p(n)$.
Then reduction modulo $n$ is injective on $S$. In particular, if $X\subset \mathbb{Z}$ is any finite set with $\operatorname{diam}(X)<p(n)$, the map $X\to \mathbb{Z}_n$ has no collisions.
\end{lemma}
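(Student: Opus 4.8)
The plan is to argue by contradiction through the single observation that a modular collision forces a large integer gap, which the smallness of the containing interval forbids. Suppose $a,b\in S$ are distinct yet satisfy $a\equiv b\pmod n$. Then $n\mid a-b$ with $a-b\neq0$, so $a-b$ is a nonzero integer multiple of $n$; in particular $|a-b|\ge n$. This is the only arithmetic input on the ``collision'' side.

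The complementary ingredient is the elementary bound $p(n)\le n$, valid for every $n\ge2$ because $p(n)$ is by definition a (prime) divisor of $n$, and a positive divisor of $n$ never exceeds $n$. Combining this with the hypothesis, if $S$ lies in an interval $[x,x+L]$ of length $L<p(n)$, then any two of its elements differ by at most $L$, whence $|a-b|\le L<p(n)\le n$. This contradicts the inequality $|a-b|\ge n$ derived above, so no two distinct elements of $S$ can be congruent modulo $n$; that is precisely injectivity of reduction on $S$.

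For the ``in particular'' clause I would simply note that any finite $X\subset\mathbb{Z}$ is contained in the closed interval $[\min X,\max X]$, whose length equals $\operatorname{diam}(X)$. If $\operatorname{diam}(X)<p(n)$, then this interval has length $<p(n)$, and the first part applies verbatim to give no collisions.

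I do not expect a genuine obstacle here: the entire content is that the least prime divisor never exceeds the modulus, so an interval shorter than $p(n)$ is a fortiori shorter than $n$ and hence behaves like a fundamental domain for reduction modulo $n$. The only point demanding a line of care is the bookkeeping that an interval of length $L$ caps pairwise differences at $L$ rather than $L+1$, so that the strict inequality survives as $|a-b|<n$ and the boundary case $|a-b|=n$ is correctly excluded.
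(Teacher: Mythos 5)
Your proof is correct and follows essentially the same contradiction argument as the paper: a collision modulo $n$ forces a difference too large for the containing interval. The only (harmless) difference is that the paper descends to the least prime divisor, using $p(n)\mid n\mid (x-y)$ to get $|x-y|\ge p(n)$ directly, whereas you bound $|a-b|\ge n$ and then invoke $p(n)\le n$; your version in fact shows the conclusion already holds for any interval of length $<n$.
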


\begin{proof}
Assume, for the sake of contradiction, that there exist distinct $x,y\in S$ with $x\equiv y\pmod{n}$.
Then $n\mid(x-y)$. Let $p=p(n)$ be the least prime divisor of $n$.
Since $p \mid n$, we also have $p \mid (x-y)$. Hence, $|x-y|\ge p$.
However, $S$ is contained in an interval of length $<p$, so for distinct $x,y\in S$ we have $0<|x-y|<p$, this is a contradiction.
Therefore the only possibility is $x=y$. Thus, the reduction map is injective on $S$.
\end{proof}

\begin{lemma}\label{lem:largest-block-placement}
Let $n,k\ge2$ and set $m=\left\lceil \frac{n}{k}\right\rceil$. There exists a partition of $\{0,1,\dots,n-1\}$ into $k$ consecutive, non-wrapping blocks whose sizes differ by at most $1$, and whose largest block has size $m$.
\end{lemma}

\begin{proof}
Write $n=ak+b$ with $0\le b<k$. Take $b$ blocks of length $a+1$ followed by $k-b$ blocks of length $a$, in the linear order $0,1,\dots,n-1$, none wraps. Hence, the largest block has size $a+1=\lceil n/k\rceil=m$.
\end{proof}

\subsection{Proof of Theorem \ref{thm:prime}}

\begin{proof}
Let $p$ be prime and $2\le k\le p$, and write $m=\left\lceil \tfrac{p}{k}\right\rceil$.
If $m=1$ (equivalently $k=p$), then every color class has size at most $1$, so $|A_i\widehat{+}A_i|=0$ for all $i$ and hence $\widehat{\Phi}_k(p)=0=\max\{0,2m-3\}$. 

Assume $m\ge2$. In any $k$-partition of $\mathbb{Z}_p$, some color class $A$ satisfies $|A|\ge m$. By Lemma~\ref{lem:ddsh},
\[
|A \widehat{+} A| \ge \min\{p,\ 2|A|-3\} \ge \min\{p,\ 2m-3\}.
\]
Since $k\ge2$, we have $m\le \lceil p/2\rceil$, so $2m-3\le p-2<p$. Hence, $\min\{p,2m-3\}=2m-3$. Thus, we have the lower bound $\widehat{\Phi}_k(p)\ge 2m-3=\max\{0,2m-3\}$.

For the matching construction, partition $\{0,1,\dots,p-1\}$ into $k$ consecutive, non-wrapping blocks, whose sizes differ by at most one. Let $I$ be a largest block, so $|I|=m$ (Lemma~\ref{lem:largest-block-placement}). By Lemma~\ref{lem:block},
\[
|I\widehat{+}I|=\max\{0,2m-3\},
\]
and all other blocks have size $m$ or $m-1$. Hence, $|A_i\widehat{+}A_i|\le \max\{0,2m-3\}$ for each $i$. Therefore, we have the upper bound
$\widehat{\Phi}_k(p)\le \max\{0,2m-3\}$.

Combining the two bounds gives $\widehat{\Phi}_k(p)=\max\{0,2\lceil p/k\rceil-3\}$.
\end{proof}

\subsection{Proof of Theorem \ref{thm:prime-ext}}\label{subsec:prime-extremizers}

\begin{proof}
Let $p$ be prime and $m=\lceil p/k\rceil$. Assume $2m-3<p$ and $m\ge5$.
By Theorem~\ref{thm:prime}, there exists an optimal $k$-coloring whose value is
\(\widehat{\Phi}_k(p)=2m-3\). Fix such an optimal coloring and let its color
classes be $A_1,\dots,A_k$.

First, no class can have size $\ge m+1$. Assume, for the sake of contradiction, there exists an $A_i$ where $|A_i|\ge m+1$.
Then, by Lemma~\ref{lem:ddsh},
\[
|A_i \widehat{+} A_i| \ge \min\{p,\,2|A_i|-3\} \ge 2(m+1)-3 = 2m-1 > 2m-3,
\]
so the maximum over classes would exceed $2m-3$, contradicting optimality of the
coloring.

Hence, every class has size $\le m$. Since $\sum_i |A_i|=p$ and
$k(m-1)<p$ (because $m=\lceil p/k\rceil$), at least one class must have size
exactly $m$. For this class (call it $A$), the optimality of the coloring forces
\(|A\widehat{+}A|\le 2m-3\), and Lemma \ref{lem:ddsh} gives \(|A\widehat{+}A|\ge 2m-3\), hence
\(|A\widehat{+}A|=2m-3\).

Finally, since $2|A|-3=2m-3<p$ and $|A|=m\ge 5$, Lemma~\ref{lem:inverse-reh} implies that $A$ is an arc (equivalently, an arithmetic progression up to an affine automorphism $x\mapsto ux+v$ with $u\in\mathbb{Z}_p^\times$).
\end{proof}

\begin{corollary}\label{cor:prime-block}
For prime $p$ and $2\le k\le p$, there exists an optimal coloring attaining $\widehat{\Phi}_k(p)$ in which the $k$ color classes are consecutive, non-wrapping blocks, whose sizes differ by at most $1$. In particular, the largest block has size $m=\lceil p/k\rceil$.
\end{corollary}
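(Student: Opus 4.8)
The plan is to recognize that the upper-bound construction already used in the proof of Theorem~\ref{thm:prime} has exactly the block structure asserted here, so the corollary is a repackaging of that construction together with the extremal value it achieves.

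First I would apply Lemma~\ref{lem:largest-block-placement} to partition $\{0,1,\dots,p-1\}$ into $k$ consecutive, non-wrapping blocks $I_1,\dots,I_k$ whose sizes differ by at most one, with the largest of size $m=\lceil p/k\rceil$. Because the maximum size is $m$ and the sizes vary by at most one, each $I_j$ has size $m$ or $m-1$.

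Next I would evaluate $|I_j\widehat{+}I_j|$ for each block. For a block of size $t\le m$, Lemma~\ref{lem:block} gives $|I_j\widehat{+}I_j|=\max\{0,2t-3\}$, with equality holding as soon as $2t-3\le p$. Since $k\ge2$ forces $m\le\lceil p/2\rceil$, we get $2m-3\le p-2<p$, so this equality condition is met for every admissible block size; the uniform formula $\max\{0,2t-3\}$ also correctly returns $0$ in the degenerate case $t=1$ (i.e.\ $k=p$). Hence each block satisfies $|I_j\widehat{+}I_j|=\max\{0,2t-3\}\le\max\{0,2m-3\}$, with the largest block attaining $\max\{0,2m-3\}$ exactly.

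Finally I would compare with the minimax value: the construction yields $\max_j|I_j\widehat{+}I_j|=\max\{0,2m-3\}$, and Theorem~\ref{thm:prime} identifies this number with $\widehat{\Phi}_k(p)$, so the block coloring is optimal and of the required shape. I do not expect any real obstacle here: the whole argument reduces to noting that the upper-bound coloring of Theorem~\ref{thm:prime} is block-shaped, and the only step needing a word of care is the equality case of Lemma~\ref{lem:block}, which is guaranteed by $2m-3<p$.
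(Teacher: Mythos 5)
Your proposal is correct and matches the paper's own proof essentially step for step: both invoke Lemma~\ref{lem:largest-block-placement} for the block partition, use Lemma~\ref{lem:block} together with the bound $2m-3\le p-2<p$ to evaluate each block's restricted sumset, and then cite Theorem~\ref{thm:prime} to certify optimality. No issues.
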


\begin{proof}
Apply Lemma~\ref{lem:largest-block-placement} to partition $\{0,1,\dots,p-1\}$ into $k$ consecutive, non-wrapping blocks, whose sizes differ by at most $1$, and take these $k$ blocks as the color classes. Let $I$ be a largest block, so $|I|=m=\lceil p/k\rceil$. By Lemma~\ref{lem:block},
\[
|I\widehat{+}I|=\max\{0,2m-3\}.
\]
Since $k\ge2$, we have $m\le \lceil p/2\rceil$, hence $2m-3\le p-2<p$. Thus $|I\widehat{+}I|=2m-3$ if $m\ge2$, and $|I\widehat{+}I|=0$ if $m=1$. Every other block has size $m$ or $m-1$, so for each color class $A_i$,
\[
|A_i\widehat{+}A_i|\le \max\{0,2m-3\}.
\]
Therefore the maximum over colors is $\max\{0,2m-3\}$. By Theorem~\ref{thm:prime}, $\widehat{\Phi}_k(p)=\max\{0,2m-3\}$. The constructed block coloring attains $\widehat{\Phi}_k(p)$ and has the stated structure.
\end{proof}

\begin{corollary}\label{cor:prime-arc}
Under the hypotheses of Theorem~\ref{thm:prime-ext}, every optimal coloring admits at least one color class of size $m$ that is an arc (equivalently, an arithmetic progression up to an affine automorphism $x\mapsto ux+v$ with $u\in\mathbb{Z}_p^\times$).
\end{corollary}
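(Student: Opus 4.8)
The plan is to observe that this Corollary is a direct translation of Theorem~\ref{thm:prime-ext}, obtained once we pin down the numerical value of the minimax optimum. The only real work is to confirm that, under the stated hypotheses, the phrases ``optimal coloring'' and ``coloring attaining value $2m-3$'' name exactly the same family of colorings; once that identification is made, Theorem~\ref{thm:prime-ext} applies verbatim.

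First I would invoke Theorem~\ref{thm:prime} to evaluate $\widehat{\Phi}_k(p)=\max\{0,\,2m-3\}$. Since $m\ge5$ forces $2m-3\ge7>0$, the maximum is realized by the second argument, so $\widehat{\Phi}_k(p)=2m-3$, a strictly positive integer. By the definition of the minimax value, a $k$-coloring is \emph{optimal} precisely when $\max_i|A_i\widehat{+}A_i|=\widehat{\Phi}_k(p)$, that is, precisely when it attains $2m-3$. Hence the side condition ``attaining $\widehat{\Phi}_k(p)=2m-3$'' that appears in the statement of Theorem~\ref{thm:prime-ext} is automatically met by every optimal coloring, and conversely every coloring attaining $2m-3$ is optimal.

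With this identification in hand, the remaining step is simply to apply Theorem~\ref{thm:prime-ext} to an arbitrary optimal coloring: it produces a color class $A$ of size $m$ with $|A\widehat{+}A|=2m-3$, and since $2|A|-3=2m-3<p$ with $|A|=m\ge5$, the inverse bound (Lemma~\ref{lem:inverse-reh}, already invoked inside the proof of Theorem~\ref{thm:prime-ext}) certifies that $A$ is an arc up to an affine automorphism $x\mapsto ux+v$. Because the argument amounts to unwinding the definition of the optimum, there is no substantive obstacle here; the single point demanding care is the strict positivity $2m-3>0$, which guarantees that the two descriptions of ``optimal'' do not diverge. For small $m$ the optimum could collapse to $0$ and the equivalence would break, but the hypothesis $m\ge5$ precludes this, so the reduction is clean.
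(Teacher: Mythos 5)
Your proposal is correct and matches the paper's own treatment: the corollary is essentially a restatement of Theorem~\ref{thm:prime-ext}, and both you and the paper reduce it to that theorem (the paper re-runs the final step via Lemma~\ref{lem:inverse-reh}, while you cite the theorem wholesale after checking that ``optimal'' and ``attaining $2m-3$'' coincide, which is the right observation). No gaps.
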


\begin{proof}
In an optimal coloring under $2m-3<p$ and $m\ge5$, some class must have size exactly $m$ and attain $|A\widehat{+}A|=2m-3$.
By Lemma~\ref{lem:inverse-reh}, that class is an arithmetic progression with nonzero difference.
Equivalently, after an affine automorphism $x\mapsto ux+v$ ($u\in\mathbb{Z}_p^\times$), it is an arc.
\end{proof}

\subsection{Proof of Theorem \ref{thm:bounds}}

\begin{proof}
Fix $n\ge 2$ and $k\ge 2$, and set $m=\left\lceil \tfrac{n}{k}\right\rceil$.
In any $k$-partition of $\mathbb{Z}_n$, some color class $A$ has $|A|\ge m$.
Applying Lemma~\ref{lem:karolyi} to this $A$ yields
\[
|A\widehat{+}A| \ge \min\{p(n),\ 2|A|-3\}
 \ge \min\{p(n),\ 2m-3\}.
\]
As $|A\widehat{+}A|\ge 0$ always, we obtain the lower bound
\[
\widehat{\Phi}_k(n) \ge \max\big\{0,\ \min\{p(n),2m-3\}\big\}.
\]

To attain the upper bound, we use two constructions.

\smallskip
First, block coloring.
Split $\{0,1,\dots,n-1\}$ into $k$ consecutive, non-wrapping blocks whose sizes differ by at most $1$.
The largest block has size $m$, and every other has size $m$ or $m-1$.
By Lemma~\ref{lem:block}, each block $I$ satisfies $|I\widehat{+}I|\le \max\{0,2|I|-3\}\le \max\{0,2m-3\}$.
Therefore,
\[
\max_{1\le i\le k}|A_i\widehat{+}A_i| \le \max\{0,2m-3\}.
\]

\smallskip
Second, coset coloring.
Let $q=q(n,k)$ be the largest divisor of $n$ with $q\le k$.
Choose a subgroup $H\le \mathbb{Z}_n$ with index $q$ (so $|H|=n/q$),
and color each coset $a+H$ with a different color.
If $k>q$, split one or more cosets into additional colors. Every new color is still contained in some coset.
By Lemma~\ref{lem:coset}, each full coset $a+H$ has
$|(a+H) \widehat{+} (a+H)| = f(|H|) = f(n/q)$, and any subset of a coset has restricted sums contained in the same coset,
thus never exceeding $f(n/q)$. Hence,
\[
\max_{1\le i\le k}|A_i\widehat{+}A_i| \le f\!\big(\tfrac{n}{q(n,k)}\big).
\]

\smallskip
Taking the better (smaller) of the two constructions gives
\[
\widehat{\Phi}_k(n) \le \min\big\{\max(0,2m-3),\ f\!\big(\tfrac{n}{q(n,k)}\big)\big\}.
\]
Together with the lower bound, this proves the theorem.
\end{proof}

\subsection{Proof of Theorem \ref{thm:interval-exact}}

\begin{proof}
Assume $2\left\lceil \tfrac{n}{k}\right\rceil-3 \le p(n)$ and set $m=\left\lceil \tfrac{n}{k}\right\rceil$.
In any $k$-partition of $\mathbb{Z}_n$, some color class $A$ has $|A|\ge m$. By the lower bound in Theorem \ref{thm:bounds},
\[
\max_i |A_i\widehat{+}A_i| \ge \max\big\{0,\ \min\{p(n),2m-3\}\big\}
 = \max\{0,2m-3\},
\]
since $2m-3\le p(n)$ by hypothesis.

By Lemma \ref{lem:largest-block-placement}, partition $\{0,1,\dots,n-1\}$ into $k$ consecutive, non-wrapping blocks with largest block $I$ of size $m$.
By Lemma \ref{lem:block}, for a representative interval $I^\ast$ of the largest block $I$ (with $|I|=m$), the integer sums with distinct summands form a contiguous interval of length $2m-3$ (after translation). This integer interval has diameter $2m-4$. Since $2m-3\le p(n)$, we have $2m-4<p(n)$. Hence, reduction modulo $n$ is injective by Lemma~\ref{lem:injective-window}, and therefore
\[
|I \widehat{+} I| = 2m-3 = \max\{0,2m-3\}.
\]
All other blocks have size $m$ or $m-1$, so by Lemma \ref{lem:block} their restricted self-sumsets have size $\le \max\{0,2m-3\}$.
Thus, the constructed coloring satisfies
\[
\max_{1\le i\le k}|A_i \widehat{+} A_i| \le \max\{0,2m-3\}.
\]

Combining the lower and upper bounds yields
\(
\widehat{\Phi}_k(n) = \max\{0,2\lceil n/k\rceil-3\}.
\)
\end{proof}

\medskip

\subsection{Proof of Theorem \ref{thm:coset-exact}}

\begin{proof}
Assume
\[
f\!\big(\tfrac{n}{q(n,k)}\big) \le \min\Big\{p(n),\ 2\big\lceil \tfrac{n}{k}\big\rceil-3\Big\},
\]
and set $q=q(n,k)$ and $t=\tfrac{n}{q}$.

Choose a subgroup $H\le \mathbb{Z}_n$ of index $q$ (so $|H|=t$), and color each coset $a+H$ with its own color. If $k>q$, split some cosets further (staying within cosets).
By Lemma \ref{lem:coset}, every full coset satisfies
\(
|(a+H) \widehat{+} (a+H)| = f(t) = f\!\big(\tfrac{n}{q(n,k)}\big),
\)
and any subset of a coset has restricted sums contained in the same coset, hence never exceeding $f(t)$.
Therefore, we arrive at the upper bound
\[
\widehat{\Phi}_k(n) \le f\!\big(\tfrac{n}{q(n,k)}\big).
\]

By Theorem \ref{thm:bounds}, we can write the lower bound
\[
\widehat{\Phi}_k(n) \ge \max\big\{0,\ \min\{p(n),2\lceil n/k\rceil-3\}\big\}
 \ge f\!\big(\tfrac{n}{q(n,k)}\big),
\]
where the last inequality is precisely our regime assumption.
Thus the equality 
\(
\widehat{\Phi}_k(n) = f\!\big(\tfrac{n}{q(n,k)}\big)
\) 
holds.

We note the edge cases $t\in\{1,2\}$.
If $t=1$ then $f(t)=0$ and necessarily $k\ge q=n$, so $m=\lceil n/k\rceil=1$ and the regime condition holds. The value $0$ is achieved because every color has size $\le1$.
If $t=2$, then $f(t)=1$. Once again, the regime condition implies $1\le \min\{p(n),2m-3\}$ (in particular $m\ge2$), and the coset coloring across the index-$2$ subgroup attains value $1$.
\end{proof}

\section{Stability}\label{sec:stability}

This section proves the stability statements recorded in Section~\ref{sec:main}.
We begin with a short background paragraph, then collect three lemmas we will use, and finally give the proofs of
Theorems~\ref{thm:stability-coset-ineq} and \ref{thm:stability-coset-threshold}.

\subsection{Background}

When a color class $A$ in $\mathbb{Z}_n$ is \emph{periodic} (mostly contained in a coset $a+H$ of a subgroup $H$),
its restricted self-sumset $A \widehat{+} A$ is constrained to live almost entirely inside the coset $2a+H$, whose size is $|H|$ when $|H|\ge 3$.
Thus, values of $|A \widehat{+} A|$ close to $|H|$ indicate strong concentration of $A$ in a single $H$-coset.
Our stability results quantify this: the heaviest coset forces many cross-coset sums that cannot overlap with the within-coset sums,
yielding a clean inequality and a threshold theorem.

\subsection{Lemmas for Section \ref{sec:stability}}

Throughout, $H \le \mathbb{Z}_n$ is a subgroup of size $t\ge 1$,
$C=a+H$ denotes a coset, and $p(t)$ denotes the least prime divisor of $t$.

\begin{lemma}\label{lem:REH-in-H}
Identify $H$ with $\mathbb{Z}_t$ via an additive isomorphism.
For any $B\subseteq C$ with $|B|\ge 2$,
\[
|B \widehat{+} B| \ge \min\{p(t),\ 2|B|-3\},
\]
and $B\widehat{+}B\subseteq 2C$.
\end{lemma}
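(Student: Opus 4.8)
The plan is to reduce the statement to the already-established K\'arolyi bound (Lemma~\ref{lem:karolyi}) by stripping off the translation and transporting everything through the fixed isomorphism $H \cong \mathbb{Z}_t$. The key observation is that the cardinality of a restricted self-sumset is invariant under both translation and group isomorphism, so this reduction loses nothing.

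First I would write $B = a + B'$ with $B' := B - a \subseteq H$, noting $|B'| = |B|$. A direct computation gives $B \widehat{+} B = 2a + (B' \widehat{+} B')$: every element of $B\widehat{+}B$ has the form $(a+b'_1) + (a+b'_2) = 2a + (b'_1 + b'_2)$ with $b'_1 \neq b'_2$ in $B'$, and conversely. Since translation by the fixed element $2a$ is a bijection of $\mathbb{Z}_n$, this yields $|B\widehat{+}B| = |B'\widehat{+}B'|$. Because $b'_1 + b'_2 \in H$, the same computation gives the containment $B\widehat{+}B \subseteq 2a + H = C + C = 2C$, which is the second assertion of the lemma.

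Next I would transport $B'$ through the chosen additive isomorphism $\phi\colon H \to \mathbb{Z}_t$. As $\phi$ is a bijective homomorphism, it carries distinct pairs to distinct pairs and respects addition, so $\phi(B' \widehat{+} B') = \phi(B') \widehat{+} \phi(B')$ with $|\phi(B')| = |B'| = |B|$. Hence $|B'\widehat{+}B'| = |\phi(B')\widehat{+}\phi(B')|$, and the problem is now entirely inside $\mathbb{Z}_t$.

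Finally I would apply Lemma~\ref{lem:karolyi} to $\phi(B') \subseteq \mathbb{Z}_t$ (valid since $|\phi(B')| = |B| \ge 2$), obtaining $|\phi(B')\widehat{+}\phi(B')| \ge \min\{p(t),\ 2|B|-3\}$, where $p(t)$ is the least prime divisor of $t = |\mathbb{Z}_t|$, which is exactly the quantity appearing in the statement. Chaining the two equalities of cardinality with this inequality gives the claimed bound. There is no substantive obstacle here; the only point requiring care is the bookkeeping, namely confirming that the least-prime parameter produced by K\'arolyi's bound in $\mathbb{Z}_t$ is the least prime divisor of the ambient order $t$, and that the distinctness constraint defining the restricted sumset is genuinely preserved by $\phi$. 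Both follow immediately from the injectivity of $\phi$.
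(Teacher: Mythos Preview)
Your proposal is correct and follows essentially the same route as the paper: translate $B$ by $-a$ into $H$, transport to $\mathbb{Z}_t$ via the isomorphism, and apply Lemma~\ref{lem:karolyi} there, with the containment $B\widehat{+}B\subseteq 2C$ read off from the same translation identity. The paper's version is terser (and adds a one-line remark that $t=1$ is vacuous since $|B|\ge2$ forces $t\ge2$), but the substance is identical.
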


\begin{proof}
If $t=1$ then the premise $|B|\ge 2$ cannot hold, so the claim is vacuous. 
For $t\ge2$, translation by $-a$ identifies $C$ with $H$ and $B$ with a subset of $H\cong \mathbb{Z}_t$.
Applying Lemma~\ref{lem:karolyi} in the group $\mathbb{Z}_t$ yields
\(|B\widehat{+}B|\ge \min\{p(t),\,2|B|-3\}\).
Translating back shows $B\widehat{+}B\subseteq 2C$.
\end{proof}

\begin{remark}
For a fixed $H$, the quantity $\alpha(C):=\max\{0,\min\{p(t),2|A\cap C|-3\}\}$ is positive only on those $H$-cosets $C$ with $|A\cap C|\ge2$, whose number is $\sigma_H(A)$. Thus, any sum of $\alpha(C)$’s effectively ranges over exactly $\sigma_H(A)$ cosets.
\end{remark}

\begin{lemma}\label{lem:distinct-cross}
Let $C=a+H$ and let $D=b+H$ and $D'=b'+H$ be cosets of $H$.
Then, $(D+C)=(D'+C)$ if and only if $D=D'$.
Equivalently, for fixed $C$, the map $D\mapsto D+C$ is a bijection on the set of $H$-cosets.
\end{lemma}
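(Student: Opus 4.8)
The plan is to reduce the entire statement to a single elementary computation: the Minkowski sum of two $H$-cosets is again one $H$-coset, after which the map $D\mapsto D+C$ is revealed to be nothing but a translation in the quotient group $\mathbb{Z}_n/H$.

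First I would compute $D+C$ explicitly. Writing $D=b+H$ and $C=a+H$, the Minkowski sum is
\[
D+C=(b+H)+(a+H)=(a+b)+(H+H).
\]
Since $H$ is a subgroup it is closed under addition and contains $0$, so $H+H=H$, giving $D+C=(a+b)+H$, the single $H$-coset represented by $a+b$. The same computation yields $D'+C=(a+b')+H$. For the forward implication I would then invoke the standard coset-equality criterion: $(a+b)+H=(a+b')+H$ holds precisely when $(a+b)-(a+b')=b-b'\in H$, i.e. when $b+H=b'+H$, i.e. $D=D'$. The reverse implication is immediate, so the biconditional follows.

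For the ``equivalently'' clause I would observe that the computation above identifies $D\mapsto D+C$ with the map on $\mathbb{Z}_n/H$ sending $\pi_H(b)$ to $\pi_H(a)+\pi_H(b)$, i.e. translation by the fixed element $\pi_H(a)$. Translation by a fixed element in an abelian group is a bijection, with inverse the translation by $\pi_H(-a)$, so the map is a bijection on the (finite) set of $H$-cosets. Alternatively, injectivity is exactly the biconditional just proved, and since there are only finitely many $H$-cosets, an injection of the set into itself is automatically a surjection.

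I do not anticipate any genuine obstacle here; the sole point requiring care is the cancellation step $H+H=H$, which is precisely where the subgroup hypothesis on $H$ (rather than that of an arbitrary subset or coset) is used, and which is what makes the sumset collapse to a single coset in the first place.
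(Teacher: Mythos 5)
Your proof is correct and follows essentially the same route as the paper's: compute $(b+H)+(a+H)=(a+b)+H$ using $H+H=H$, then read off injectivity (hence bijectivity, by finiteness or by inverting the translation) in the quotient $\mathbb{Z}_n/H$. You simply spell out the coset-equality criterion and the translation-in-the-quotient viewpoint that the paper leaves implicit.
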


\begin{proof}
$(D+C)=(b+H)+(a+H)=(a+b)+H$ depends only on the class $b+H$. Distinct classes give distinct sums in the quotient $\mathbb{Z}_n/H$.
\end{proof}

\begin{lemma}\label{lem:multi-coset-internal}
Let $H\le\mathbb{Z}_n$ have size $t\ge3$ and index $q_H=[\mathbb{Z}_n\!:\!H]$.
For each $H$-coset $C$, put $A_C:=A\cap C$ and
\[
\alpha(C):=\max\{0,\min\{p(t),\,2|A_C|-3\}\}.
\]
If $q_H$ is odd, the map $C\mapsto 2C$ is a bijection on $H$-cosets, and
\[
|A\widehat{+}A| \ge \sum_{\substack{C\\|A_C|\ge2}}\alpha(C).
\]
In general,
\[
|A\widehat{+}A| \ge \sum_{E} \max_{\substack{C:\,2C=E}}\alpha(C),
\]
where the sum runs over the $H$-cosets $E$ of the form $E=2C$.
\end{lemma}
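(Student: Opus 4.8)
The plan is to bound $|A\widehat{+}A|$ from below by retaining only the \emph{within-coset} restricted sums and tracking where they land in the quotient $\mathbb{Z}_n/H\cong\mathbb{Z}_{q_H}$. For each $H$-coset $C$, every sum of two distinct elements of $A_C\subseteq A$ is a genuine restricted sum of $A$, so $A\widehat{+}A\supseteq\bigcup_C (A_C\widehat{+}A_C)$. By Lemma~\ref{lem:REH-in-H} each piece $A_C\widehat{+}A_C$ is contained in the coset $2C$ and satisfies $|A_C\widehat{+}A_C|\ge\alpha(C)$; the convention $\alpha(C)=0$ when $|A_C|\le1$ is consistent, since then $A_C\widehat{+}A_C=\varnothing$.

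First I would record the structure of the doubling map $\delta\colon C\mapsto 2C$ on $\mathbb{Z}_n/H$. As this quotient is cyclic of order $q_H$, the map $\delta$ is multiplication by $2$, hence a bijection exactly when $\gcd(2,q_H)=1$, i.e. when $q_H$ is odd; when $q_H$ is even it is two-to-one onto its image $2(\mathbb{Z}_n/H)$. This is the single place where the parity hypothesis enters the argument.

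Next I would partition the lower-bounding union by target coset. Letting $E$ range over the image cosets of $\delta$, the pieces $A_C\widehat{+}A_C$ with $2C=E$ all lie inside the single coset $E$, whereas pieces mapping to distinct cosets are disjoint. Intersecting $A\widehat{+}A$ with each $E$ and using that a union of subsets of $E$ has size at least that of any one member,
\[
|A\widehat{+}A|\ \ge\ \sum_{E}\Big|\bigcup_{C:\,2C=E}(A_C\widehat{+}A_C)\Big|\ \ge\ \sum_{E}\ \max_{C:\,2C=E}|A_C\widehat{+}A_C|\ \ge\ \sum_{E}\ \max_{C:\,2C=E}\alpha(C),
\]
which is precisely the general bound. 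In the odd case each fiber $\{C:2C=E\}$ is a singleton, so the maximum collapses to a single term and the sum over image cosets $E$ becomes a sum over \emph{all} cosets $C$; discarding the terms with $\alpha(C)=0$ (those with $|A_C|\le1$) yields $\sum_{|A_C|\ge2}\alpha(C)$.

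The step I would be most careful with is the even case: there the fibers of $\delta$ have size two, so two heavy cosets $C,C'$ with $2C=2C'$ deposit their within-coset sums into the \emph{same} coset $E$ of size $t$, and these contributions may overlap arbitrarily. Taking the maximum rather than the sum over each fiber is exactly what keeps the inequality valid, at the cost of being genuinely lossy in general. I would also emphasize that no cross-coset sums are invoked, which keeps the argument self-contained; accounting for them could sharpen the bound but is not needed here.
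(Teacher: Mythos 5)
Your proof is correct and follows essentially the same route as the paper's: restrict attention to within-coset sums, use Lemma~\ref{lem:REH-in-H} to place $A_C\widehat{+}A_C$ inside $2C$ with size at least $\alpha(C)$, and then exploit disjointness of distinct image cosets, taking the maximum over each fiber of the doubling map when $q_H$ is even. Your explicit observation that the doubling map on $\mathbb{Z}_n/H\cong\mathbb{Z}_{q_H}$ is two-to-one in the even case is a slightly more detailed justification than the paper gives, but the argument is the same.
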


\begin{proof}
By Lemma~\ref{lem:REH-in-H}, $(A_C\widehat{+}A_C)\subseteq 2C$ and
$|A_C\widehat{+}A_C|\ge\alpha(C)$. If $q_H$ is odd then the sets $2C$ are distinct,
so the internal restricted sums from different $C$ lie in disjoint cosets, and the sum of sizes applies.
When $q_H$ is even, group the occupied cosets by their image $2C=E$: in each $E$ the union of
internal sums has size at least the largest $\alpha(C)$ in that fiber. Summing over $E$ gives the claim.
\end{proof}

\subsection{Proof of Theorem \ref{thm:stability-coset-ineq}}

\begin{proof}
Let $H\le \mathbb{Z}_n$ have size $t\ge 3$, and let $A\subseteq \mathbb{Z}_n$.
Choose a coset $C=a+H$ maximizing $x:=|A\cap C|$, and let $r$ be the number of other $H$-cosets meeting $A$.
Set
\[
\alpha^\star := \max\{0,\ \min\{p(t),\ 2x-3\}\}.
\]

Write $A_C:=A\cap C$ and $x=|A_C|$.
If $x\le 1$ then $\alpha^\star=0$, and for each other occupied coset $D=b+H$ choose any $y\in A\cap D$.
Translation by $y$ is injective, so the $x$ sums $y+A_C$ are pairwise distinct and lie in $D+C$.
As $D$ ranges over the $r=r_H(A)-1$ distinct cosets $D\ne C$, Lemma~\ref{lem:distinct-cross} gives pairwise distinct cosets $D+C$,
so these $rx$ sums are all distinct. Hence, $|A\widehat{+}A|\ge rx=\alpha^\star+rx$ as claimed.

Assume now $x\ge 2$. By Lemma~\ref{lem:REH-in-H},
\[
|A_C\widehat{+}A_C| \ge \min\{p(t),\ 2x-3\}\quad\text{and}\quad A_C\widehat{+}A_C\subseteq 2C.
\]
For each other occupied coset $D=b+H$, fix $y\in A\cap D$.
Then $y+A_C$ contributes exactly $x$ distinct residues lying in $D+C$. Since $D\ne C$, we have $D+C\ne 2C$.
As $D$ varies over the $r=r_H(A)-1$ distinct cosets, Lemma~\ref{lem:distinct-cross} implies these cosets $D+C$ are pairwise distinct.
Hence, the $rx$ sums are distinct and disjoint from $2C$.
Adding the $\min\{p(t),2x-3\}$ sums inside $2C$ yields at least
$\min\{p(t),2x-3\}+rx=\alpha^\star+rx$ distinct residues in $A\widehat{+}A$.
\end{proof}

\begin{remark}\label{rem:two-routes-max}
Combining Theorem~\ref{thm:stability-coset-ineq} with Lemma~\ref{lem:multi-coset-internal} yields
\[
|A\widehat{+}A| \ge \max\Big\{\ \alpha^\star+rx,\ \ \sum_{E}\max_{C:\,2C=E}\alpha(C)\ \Big\}.
\]
Here $r=r_H(A)-1$, $\alpha(C):=\max\{0,\min\{p(t),\,2|A\cap C|-3\}\}$, and $q_H:=|\mathbb{Z}_n/H|=n/t$. Also, if $x=|A\cap C_\star|$ for a heaviest coset $C_\star$, then the number $s$ of cosets with $|A_C|=x$ satisfies $1\le s\le \sigma_H(A)$.

This internal-sums bound is sometimes sharper. For example, if $q_H$ is odd and
$A$ meets exactly $s\ge2$ cosets with $|A_C|=x\ge3$, and if $p(t)\ge 2x-3$ so no capping occurs, then
\[
\sum_{E}\max_{C:\,2C=E}\alpha(C)=s(2x-3)
\quad\text{while}\quad
\alpha^\star+rx=(2x-3)+(s-1)x=xs+(x-3),
\]
where we used $r=r_H(A)-1=s-1$ in this configuration. Thus, the internal-sums bound exceeds $\alpha^\star+rx$ by $(s-1)(x-3)$ whenever $x\ge4$ (equal when $x=3$). If $q_H$ is even or $p(t)$ is small, the advantage may vanish due to collisions or capping.
\end{remark}

\subsection{Proof of Theorem \ref{thm:stability-coset-threshold}}
\begin{proof}
In the setting of Theorem~\ref{thm:stability-coset-ineq}, assume additionally that
\[
\big|A \widehat{+} A\big| \le t+s\quad\text{for some integer }s\ge 0,
\]
where $t=|H|$.
Let $x=|A\cap C|$ for a heaviest coset $C$, and set $\alpha^\star:=\max\{0,\min\{p(t),2x-3\}\}$.
By Theorem~\ref{thm:stability-coset-ineq},
\(
|A \widehat{+} A| \ge \alpha^\star + rx.
\)
Combining with the assumed upper bound gives $rx\le t+s-\alpha^\star$. Hence,
\[
r \le \frac{t+s-\alpha^\star}{x}\quad\text{for $x>0$. If $x=0$, then $A=\varnothing$ and the inequality is trivial.}
\]

If $2x-3\le p(t)$ then $\alpha^\star=2x-3$, and the inequality $3x>t+s+3$ implies
\(
\frac{t+s - (2x-3)}{x} < 1
\),
so $r<1$. By integrality, $r=0$ and hence $A\subseteq C$.
\end{proof}

\bibliographystyle{amsplain}
\bibliography{refs}

\end{document}